\definecolor{mygray}{gray}{0.35}
\newcommand{\black}[1]{\textcolor{black}{#1}} 
\newtheorem{definition}{Definition}[section]
\newcommand{\defeq}{\vcentcolon=}
\newcommand{\defeqRight}{=\vcentcolon}
\newtheorem{theorem}{Theorem}
\newtheorem{lemma}{Lemma}
\newcommand{\I}{\mathbf{I}} 
\newcommand{\mbf}[1]{\mathbf{#1}} 
\newcommand{\bmat}[1]{\begin{bmatrix} #1 \end{bmatrix}} 
\newcommand{\Hcal}{\mathcal{H}} 
\newcommand{\Xcal}{\mathcal{X}}
\newcommand{\A}{\mathbf{A}} 
\newcommand{\B}{\mathbf{B}} 
\newcommand{\x}{\mathbf{x}} 
\newcommand{\bu}{\mathbf{u}} 
\newcommand{\y}{\mathbf{y}} 
\newcommand{\w}{\mathbf{w}} 
\newcommand{\z}{\mathbf{z}} 
\newcommand{\bP}{\mathbf{P}} 
\newcommand{\E}{\mbf{E}} 
\newcommand{\bH}{\mbf{H}}
\newcommand{\M}{\mbf{M}}
\newcommand{\W}{\mbf{W}}
\newcommand{\V}{\mbf{V}}
\newcommand{\X}{\mbf{X}}
\newcommand{\Gcal}{\mathcal{G}}
\newcommand{\Ell}{\mathcal{L}}
\newcommand{\Tcal}{\mathcal{T}}
\newcommand{\vecdim}[1]{\in \mathbb{R}^{#1}}
\newcommand{\xdot}{\dot{x}}
\newcommand{\norm}[1]{\left\lVert#1\right\rVert}
\newcommand{\absVal}[1]{\left\lvert#1\right\rvert}
\newcommand{\0}{\mathbf{0}}
\newcommand{\J}{\mathbf{J}}
\acrodef{dl}[{DL}]{deep learning}
\acrodef{rl}[{RL}]{reinforcement learning}
\acrodef{nn}[{NN}]{neural network}
\acrodef{dnn}[{DNN}]{deep neural network}
\acrodef{tdl}[{TDL}]{temporal difference learning}
\acrodef{pid}[{PID}]{proportional–integral–derivative}
\acrodef{us}[{US}]{Ultrasound}
\acrodef{mse}[{MSE}]{mean squared error}
\acrodef{sgd}[{SGD}]{stochastic gradient descent}
\acrodef{ico}[{ICO}]{iterative convex overbounding}
\acrodef{lmi}[{LMI}]{linear matrix inequality}
\acrodef{mjls}[{MJLS}]{Markov jump linear system}
\acrodef{io}[{IO}]{input-output}
\acrodef{iqc}[{IQC}]{integral quadratic constraints}
\acrodef{cnn}[{CNN}]{convolutional neural network}
\acrodef{il}[{IL}]{Imitation learning}
\acrodef{mpc}[{MPC}]{model predictive control}
\acrodef{sdp}[{SDP}]{semi-definite programming}
\acrodef{relu}[{ReLU}]{rectified linear unit}
\acrodef{us}[US]{ultrasound}
\acrodef{mdp}[MDP]{Markov Decision Process}
\acrodef{iid}[iid]{identical and independently distributed random variable}
\acrodef{pid}[PID]{Proportional Integral Derivative}
\acrodef{lqr}[LQR]{linear-quadratic regulator}
\acrodef{cpa}[CPA]{continuous piecewise affine}
\acrodef{hji}[HJI]{Hamilton Jacobi Inequality}
\title{\LARGE \bf
Improved Small-Signal $\Ell_2$-gain Analysis for Nonlinear Systems
}
\author{Amy K. Strong$^{1}$, Reza Lavaei$^{1}$, and Leila J. Bridgeman$^1$
\thanks{This work was supported by NSF GFRP Grant No. 1644868, the Alfred P. Sloan
Foundation, ONR YIP Grant No. N00014-23-1-2043, and NSF Grant No. 2303158.}
\thanks{$^{1}$Amy K. Strong,  Reza Lavaei, and Leila J. Bridgeman are with the Department of Mechanical Engineering and Materials Science at Duke University, Durham, NC, 27708, USA.
        (email: {\tt\small amy.k.strong@duke.edu, reza.lavaei@duke.edu, leila.bridgeman@duke.edu}), phone: (919) 660-1260}}
\newcommand\copyrighttext{%
  \footnotesize \textcopyright 2024 IEEE.  Personal use of this material is permitted.  Permission from IEEE must be obtained for all other uses, in any current or future media, including reprinting/republishing this material for advertising or promotional purposes, creating new collective works, for resale or redistribution to servers or lists, or reuse of any copyrighted component of this work in other works.}
\newcommand\copyrightnotice{%
\begin{tikzpicture}[remember picture,overlay]
\node[anchor=south,yshift=10pt] at (current page.south) {\fbox{\parbox{\dimexpr\textwidth-\fboxsep-\fboxrule\relax}{\copyrighttext}}};
\end{tikzpicture}%
}
\begin{document}
\bstctlcite{BSTcontrol}

\maketitle
\copyrightnotice
\thispagestyle{empty}
\pagestyle{empty}

\begin{abstract}

The $\Ell_2$-gain characterizes a dynamical system's input-output properties, 
but can be difficult to determine for nonlinear systems. 
Previous work designed a nonconvex optimization problem to simultaneously search for a \ac{cpa} storage function and an upper bound on the small-signal $\Ell_2$-gain of a dynamical system over a triangulated region about the origin. This work improves upon those results by establishing a tighter upper-bound on a system's gain using a convex optimization problem.
By reformulating the relationship between the Hamilton-Jacobi inequality and $\Ell_2$-gain as a \ac{lmi} and then developing novel \ac{lmi} error bounds for a triangulation, tighter gain bounds are derived and computed more efficiently. Additionally, a combined quadratic and \ac{cpa} storage function is considered to expand the nonlinear systems this optimization problem is applicable to. Numerical results demonstrate the tighter upper bound on a dynamical system's gain. 

\end{abstract}

\section{INTRODUCTION}
\Ac{io} stability theory views a dynamical system as a mapping between inputs and outputs. One of the most widely used \ac{io} descriptors is the $\Ell_2$-gain of a system, which bounds the norm of the output with respect to that of the input. The $\Ell_2$-gain is leveraged in control design through the Small Gain Theorem \cite{zames1966input}, the basis of $\Hcal_{\infty}$ control. As such, the $\Ell_2$-gain of the system is an essential tool both for analysis of and control synthesis for dynamical systems.

While the gain of a linear dynamical system can be determined easily \cite{zhou1998essentials}, this is a more difficult task for nonlinear systems. The nonlinear Kalman-Yacubovich-Popov lemma \cite{brogliato2007dissipative} or relationships between $\Ell_2$-gain and the Hamilton-Jacobi equation \cite{van19922} can provide conditions to determine $\Ell_2$-gain. However, these methods require knowledge of a positive, semi-definite storage function for the system. Determining the storage function for a nonlinear system is a non-trivial task  and an active area of research\cite{giesl2015review}\cite{giesl2012construction}\cite{giesl2014revised}\cite{summers2013quantitative}\cite{mitchell2008flexible}.

Recent work formulated a nonconvex optimization problem that bounds the gain of a nonlinear system by  searching for a \acf{cpa} storage function that satisfies a \ac{hji} \cite{lavaei2023L2}. Constraints were imposed on the function at each vertex of a triangulation to ensure certain properties, leveraging a similar process in previous work on Lyapunov function synthesis \cite{giesl2012construction, giesl2014revised}. The optimization problem in \cite{lavaei2023L2} depends on a specific error term for the \ac{hji} constraint that, when imposed with the \ac{hji} on each vertex of the triangulation, ensures the inequality holds for the entire region. However, the \ac{hji} and its error term are polynomial in design variables -- leading to a conservative, nonconvex optimization problem that requires methods like \ac{ico} to solve. Moreover, specific requirements on the error bound restricted the optimization problem to a limited class of nonlinear systems -- notably precluding linear control affine terms. 

This paper develops a novel \ac{lmi} error upper bound to impose \ac{lmi} constraints on a \ac{cpa} function on a triangulation -- convexifying the previous optimization problem. With this, the \ac{hji} can be reformulated as an \ac{lmi}, creating a convex optimization problem that determines a \ac{cpa} storage function and the global optimum of the gain bound. Further, a convex optimization problem is developed for a combined quadratic and \ac{cpa} storage function, expanding the class of dynamical systems this work can be applied to. The end result is a tighter upper bound on the small-signal $\Ell_2$-gain of nonlinear dynamical systems than that of \cite{lavaei2023L2}. 

\section{PRELIMINARIES}

The interior, boundary, and closure of the set $\Omega \subset \mathbb{R}^n$ are denoted as $\Omega^o,$ $ \delta \Omega,$ and $\overline{\Omega},$ respectively. The symbol $\mathfrak{R}^n$ denotes the set of all compact subsets $\Omega \subset \mathbb{R}^n$ satisfying i) $\Omega^o$ is connected and contains the origin and ii) $\Omega = \overline{\Omega^o}$. Scalars, vectors, and matrices are denoted as $x,$ $\x,$ and $\X$, respectively. The notation $\mathbb{Z}_{a}^b$ ($\mathbb{Z}_{\bar{a}}^{\underline{b}}$) denotes the set of integers between $a$ and $b$ inclusive (exclusive). The $p$-norm of the vector $\x \vecdim{n}$ is shown as $||\cdot||_p,$ where $p \in \mathbb{Z}_1^{\infty}.$ Let $\Ell^{n}_p$ represent the normed function space with the norm $||\x||_{\Ell^{n}_p} = \left(\int_0^\infty ||\x(t)||^{p}dt\right)^{\frac{1}{p}}$ for $p \in \mathbb{Z}_1^{\underline{\infty}}$ and $||\x||_{\Ell^{n}_\infty} = \sup_{t\geq 0}||\x(t)||\leq \infty.$ The extended $\Ell^n_{pe}$ space is defined as the set of all functions $\x(t):[0, \infty) \rightarrow\mathbb{R}^n$ for which the truncation to $t \in [0, T]$ is in $\Ell^{n}_p$ $\forall \; T \geq 0.$
By $f \in \mathcal{C}^k$, it is denoted that a real valued function, $f$, is $k$-times continuously differentiable over its domain. 
 
The positive definiteness of a matrix is denoted by $\bP \succ 0$, while positive semi-definite and negative definite and semi-definite matrices are denoted similarly. Identity and zero matrices are shown as $\I$ and $\mathbf{0}.$ Let $1_n$ denote a vector of ones in $\mathbb{R}^n.$

The right-hand (left-hand) upper Dini derivatives for some function, $f$  is defined as $D^{+}f(\y) \defeq \lim \sup_{k \rightarrow 0^{+}} \!\!\frac{f(\x {+} kg(\x)) {-} f(\x)}{k}\! \left(\!\lim \sup_{k \rightarrow 0^{-}} \!\!\frac{f(\x {+} kg(\x)) {-} f(\x)}{k}\!\right)\!\!,$ where $k \in \mathbb{R}$ and $\dot{\x} = g(\x)$ \cite{giesl2014revised}.
\subsection{CPA Functions}
In \cite{giesl2012construction},\cite{giesl2014revised}, a compact subset of a system's domain,  $\mathbb{R}^n$, is triangulated, and a constrained linear optimization problem is formulated to solve for a \ac{cpa} Lyapunov function -- affine on each simplex.
This work uses a similar process to determine a positive, semi-definite storage function. The necessary definitions and tools for triangulation and problem formulation are listed below.
\begin{definition} (\textit{Affine independence} 
\cite{giesl2014revised}):
    A collection of $m$ vectors $\{\x_0, \x_1, \hdots , \x_m\} \subset \mathbb{R}^n$ is affinely independent if $\x_1-\x_0, \hdots, \x_m - \x_0$ are linearly independent.  
\end{definition}
\begin{definition} (\textit{$n$ - simplex} \cite{giesl2014revised}):
    A simplex, $\sigma$, is defined the convex hull of $n+1$ affinely independent vectors, $co\{\x_j\}_{j=0}^n$, where each vector, $\x_{j} \in \mathbb{R}^n$, is a vertex.
\end{definition}
\begin{definition} (\textit{Triangulation} \cite{giesl2014revised}):
    Let  $\Tcal = \{\sigma_i\}_{i=1}^{m_{\Tcal}} \in \mathfrak{R}^n$ represent a finite collection of $m_{\Tcal}$ simplexes, where the intersection of any two simplexes is a face or an empty set.
\end{definition}

Let $\Tcal = \{\sigma_i\}_{i=1}^{m_{\Tcal}}.$ Further, let $\{\x_{i,j}\}_{j=0}^n$ be $\sigma_i$'s vertices. The choice of $\x_{i,0}$ in $\sigma_i$ is arbitrary unless $0 \in \sigma_i,$ in which case $\x_{i,0} = 0$ \cite{giesl2014revised}. The vertices of the triangulation $\Tcal$ of $\Omega$ are denoted as $\mathbb{E}_{\Omega}.$
Let $\Tcal_{0}$ denote the simplexes in $\Tcal$ containing $0$ and $\Tcal_{\Omega\setminus\{0\}}$ denotes those in $\Omega$ that do not contain $0$.
\begin{lemma}\label{lemma_gradW}(Remark 9 \cite{giesl2014revised})
    Consider the triangulation $\Tcal = \{\sigma_i\}_{i=1}^{m_{\Tcal}},$ where $\sigma_i = \text{co}(\{\x_{i,j}\}_{j=0}^n)$, and a set $\W = \{W_{\x}\}_{\x \in \mathbb{E_{\Tcal}}}\subset\mathbb{R},$ where $W(\x) = W_{\x}, \forall \x \in \mathbb{E}_{\Tcal}.$ For simplex $\sigma_i$, let $\X_i \vecdim{n \times n}$ be a matrix that has $\x_{i,j} - \x_{i,0}$ as its $j$-th row and $\bar{W}_i\vecdim{n}$ be a vector that has $W_{\x_{i,j}} - W_{\x_{i,0}},$ as its $j$-th element. The function $W(\x) = \x_i^\top \X_i^{-1}\bar{W}_i,$ is the unique \ac{cpa} interpolation of $\W$ on $\Tcal$ for $\x \in \sigma_i$. 
\end{lemma}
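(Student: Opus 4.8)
The plan is to unpack the meaning of ``\ac{cpa} interpolation of $\W$ on $\Tcal$'' --- a continuous function, affine on each $\sigma_i$, whose value at every $\x\in\mathbb{E}_{\Tcal}$ equals $W_{\x}$ --- and verify the three ingredients separately: (i) the stated formula is well defined and affine on each simplex with the correct vertex values, (ii) it is the only affine function on $\sigma_i$ with those values, and (iii) the per-simplex pieces agree on shared faces, so the global function is continuous and hence \ac{cpa}.

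First I would fix $\sigma_i = \text{co}(\{\x_{i,j}\}_{j=0}^n)$ and note that $\X_i$ is invertible: by \emph{affine independence}, $\x_{i,1}-\x_{i,0},\ldots,\x_{i,n}-\x_{i,0}$ are linearly independent, and these are exactly the rows of the $n\times n$ matrix $\X_i$, so $\X_i^{-1}$ exists. Then I would read the claimed function on $\sigma_i$ as the affine map with constant gradient $\mbf{a}_i \defeq \X_i^{-1}\bar{W}_i$ anchored at $\x_{i,0}$, i.e. $W(\x)=W_{\x_{i,0}}+(\x-\x_{i,0})^\top \X_i^{-1}\bar{W}_i$ (the compact expression in the statement suppresses the anchor). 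Evaluating at a vertex $\x_{i,k}$ with $k\in\{1,\ldots,n\}$, the term $(\x_{i,k}-\x_{i,0})^\top \X_i^{-1}\bar{W}_i$ is the $k$-th row of $\X_i$ times $\mbf{a}_i$, which is the $k$-th entry of $\bar W_i$, namely $W_{\x_{i,k}}-W_{\x_{i,0}}$; adding $W_{\x_{i,0}}$ gives $W_{\x_{i,k}}$, and $k=0$ is immediate. So $W$ is affine on $\sigma_i$ and interpolates $\W$ at all of $\sigma_i$'s vertices.

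For uniqueness on a single simplex I would invoke the standard fact that an affine function $\mathbb{R}^n\to\mathbb{R}$ has the form $g(\x)=g(\x_{i,0})+(\x-\x_{i,0})^\top\mbf{a}$ for a unique $\mbf{a}$; imposing $g(\x_{i,j})=W_{\x_{i,j}}$ for $j\in\{1,\ldots,n\}$ forces $\X_i\mbf{a}=\bar W_i$, hence $\mbf{a}=\X_i^{-1}\bar W_i$ and $g$ coincides with the formula above on the whole affine hull of $\sigma_i$. Since every point of $\Omega$ lies in some simplex of $\Tcal$, this already pins the interpolant down simplex-by-simplex; what remains is global well-definedness and continuity.

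Finally, for the gluing step I would take two simplexes $\sigma_i,\sigma_j$ whose intersection is a common face $F$; by construction the vertices of $F$ are shared and carry identical values in $\W$. The two per-simplex affine pieces each restrict to an affine function on the affine hull of $F$ agreeing with $\W$ on $F$'s (affinely independent) vertices, so by the single-simplex uniqueness argument applied within that lower-dimensional affine subspace they coincide on $F$. Hence the piecewise definition is consistent on all overlaps, the resulting $W$ is continuous, and being affine on each $\sigma_i$ it is \ac{cpa}; uniqueness follows from the per-simplex uniqueness. I expect the face-consistency step to be the main obstacle --- precisely, justifying that the restriction of an affine function to a subsimplex is again the unique affine interpolant of the inherited vertex data there --- but it reduces cleanly to the single-simplex uniqueness result once one observes that the shared vertices force the shared data.
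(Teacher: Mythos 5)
Your proof is correct and complete: invertibility of $\X_i$ from affine independence, vertex interpolation and per-simplex uniqueness from the fact that an affine map is determined by its values at $n+1$ affinely independent points, and global continuity from agreement of neighboring pieces on shared faces (whose vertices carry the same data). The paper itself gives no proof of this lemma---it is imported verbatim as Remark 9 of the cited reference---and your argument is the standard one behind that result, including the correct reading of the compact formula $W(\x)=\x_i^\top\X_i^{-1}\bar{W}_i$ as the affine map $W_{\x_{i,0}}+(\x-\x_{i,0})^\top\X_i^{-1}\bar{W}_i$ anchored at $\x_{i,0}$.
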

The following lemma uses Taylor's Theorem \cite{fitzpatrick2009advanced} to develop an error term that compares function $g \in \mathcal{C}^2$ evaluated at some $\x \in \sigma_i$ to $g$ evaluated at the vertex points of $\sigma_i$. 
\begin{lemma}\label{lemma_eq_bound} (\textit{Proposition 2.2 and Lemma 2.3} \cite{giesl2012construction})
Consider $\hat{\Omega} \in \mathfrak{R}^n$ and its triangulation $\Tcal = \{\sigma_i\}_{i = 1}^{m_{\Tcal}}$. Let $g: \hat{\Omega} \rightarrow \mathbb{R}^n$ where $g \in \mathcal{C}^2.$ Then, for any $x \in \sigma_i = \text{co}(\{\x_{i,j}\}_{j=0}^n) \in \Tcal$,
\begin{equation}\label{eq:infNormError}
    \norm{g(\x) - \sum_{j=0}^n\lambda_jg(\x_{i,j})}_{\infty} \leq \frac{1}{2}\beta_i\sum_{j=0}^n\lambda_jc_{i,j},
\end{equation}
where $\{\lambda_j\}_{j=0}^n \in \mathbb{R}$ is the set of unique coefficients satisfying $\x = \sum_{j=0}^n\lambda_j\x_{i,j}$ with $\sum_{j=0}^n\lambda_j = 1$ and $0 \leq \{\lambda_j\}_{j=0}^n \leq 1,$
\begin{equation}\label{eq:oldC}
     c_{i,j} {=} n\norm{\x_{i,j} {-} \x_{i,0}}\!(\max_{k \in \mathbb{Z}_1^n}\norm{\x_{i,k} {-} \x_{i,0}}\! {+} \!\norm{\x_{i,j} {-} \x_{i,0}}),
\end{equation}
and 
\begin{equation*}\label{eq:oldB}
    \beta_i \geq \max_{p,q,r \in\mathbb{Z}_1^n} \max_{\xi \in \sigma_i} \absVal{\frac{\partial^2f^{(p)}(\x)}{\partial \x ^{(q)}\partial \x ^{(r)}}\Bigr|_{\x = \xi}}.
\end{equation*}
\end{lemma}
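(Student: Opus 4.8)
\emph{Proof strategy.} The plan is to reduce the vector-valued bound to a scalar estimate for each component $g^{(p)}$, $p \in \mathbb{Z}_1^n$, by expanding $g^{(p)}$ to second order about the anchor vertex $\x_{i,0}$, and then to eliminate the constant and linear contributions using the barycentric identities $\sum_{j=0}^n \lambda_j = 1$ and $\sum_{j=0}^n \lambda_j \x_{i,j} = \x$. First I would fix $\x \in \sigma_i$ with its unique barycentric coordinates $\{\lambda_j\}_{j=0}^n$. Since $g \in \mathcal{C}^2$ and $\sigma_i$ is convex, the segments $[\x_{i,0}, \x]$ and $[\x_{i,0}, \x_{i,j}]$ lie in $\sigma_i$, so Taylor's Theorem gives, for each $p$ and $j$,
\begin{align*}
g^{(p)}(\x) &= g^{(p)}(\x_{i,0}) + \nabla g^{(p)}(\x_{i,0})^\top(\x - \x_{i,0}) + \tfrac12(\x - \x_{i,0})^\top \nabla^2 g^{(p)}(\zeta)(\x - \x_{i,0}), \\
g^{(p)}(\x_{i,j}) &= g^{(p)}(\x_{i,0}) + \nabla g^{(p)}(\x_{i,0})^\top(\x_{i,j} - \x_{i,0}) + \tfrac12(\x_{i,j} - \x_{i,0})^\top \nabla^2 g^{(p)}(\zeta_j)(\x_{i,j} - \x_{i,0}),
\end{align*}
for some $\zeta, \zeta_j \in \sigma_i$.

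Next I would form $g^{(p)}(\x) - \sum_{j=0}^n \lambda_j g^{(p)}(\x_{i,j})$: the terms $g^{(p)}(\x_{i,0})$ cancel because $\sum_j \lambda_j = 1$, and the gradient terms cancel because $\sum_j \lambda_j(\x_{i,j} - \x_{i,0}) = \x - \x_{i,0}$, leaving only the $\x$-remainder minus the $\lambda_j$-weighted vertex remainders. To bound these I would use (i) the definition of $\beta_i$ together with Cauchy--Schwarz, $\absVal{\mathbf{v}^\top \nabla^2 g^{(p)}(\cdot)\,\mathbf{v}} \le \beta_i \norm{\mathbf{v}}_1^2 \le n\beta_i \norm{\mathbf{v}}_2^2$; and (ii) the triangle inequality $\norm{\x - \x_{i,0}} \le \sum_j \lambda_j \norm{\x_{i,j} - \x_{i,0}}$, which also yields $\norm{\x - \x_{i,0}} \le \max_{k \in \mathbb{Z}_1^n}\norm{\x_{i,k} - \x_{i,0}}$ and hence, multiplying the two, $\norm{\x - \x_{i,0}}^2 \le \big(\sum_j \lambda_j \norm{\x_{i,j} - \x_{i,0}}\big)\max_{k \in \mathbb{Z}_1^n}\norm{\x_{i,k} - \x_{i,0}}$. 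Distributing the $\x$-remainder bound across the weights $\lambda_j$ and adding the per-vertex remainder $\lambda_j\norm{\x_{i,j}-\x_{i,0}}^2$ term by term regroups exactly into $\tfrac12 \beta_i \sum_{j=0}^n \lambda_j c_{i,j}$ with $c_{i,j}$ as in \eqref{eq:oldC} (the $j=0$ contribution vanishing on both sides). Since this bound does not depend on $p$, taking the maximum over $p$ gives the asserted $\norm{\cdot}_\infty$ inequality.

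The hard part will be the remainder bookkeeping in the last step: extracting the factor $n$ cleanly, and above all replacing $\norm{\x - \x_{i,0}}^2$ by a $\lambda$-weighted product of vertex distances so that the $\x$-remainder and the per-vertex remainders merge into the single closed form $c_{i,j}$ --- anchoring the Taylor expansion at $\x$ rather than at $\x_{i,0}$ would not produce this form. One must also keep track that every intermediate Taylor point stays in $\sigma_i$, which is precisely where convexity of the simplex is used.
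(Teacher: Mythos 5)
Your argument is correct and is essentially the proof of the cited source (the paper itself states this lemma without proof, importing it from Proposition~2.2 and Lemma~2.3 of Giesl--Hafstein): anchoring the Taylor expansions at $\x_{i,0}$ so the constant and gradient terms cancel via $\sum_j \lambda_j = 1$ and $\sum_j \lambda_j \x_{i,j} = \x$, bounding each Lagrange remainder by $\tfrac{n}{2}\beta_i\norm{\cdot}_2^2$ through $\absVal{\mathbf{v}^\top \bH \mathbf{v}} \le \beta_i \norm{\mathbf{v}}_1^2 \le n\beta_i\norm{\mathbf{v}}_2^2$, and converting $\norm{\x-\x_{i,0}}^2$ into the $\lambda$-weighted product with $\max_k\norm{\x_{i,k}-\x_{i,0}}$ is exactly the standard route to the closed form $c_{i,j}$ (with $c_{i,0}=0$ consistently absorbing the $j=0$ term). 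No gaps.
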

\subsection{\texorpdfstring{$\Ell_2$}{L2} Stability Analysis}
The $\Ell_2$-gain is a general \ac{io} descriptor of a mapping between two Hilbert spaces.
\begin{definition}(\textit{$\Ell_{p}$ stability \cite{zames1966input}})
    A mapping $\Gcal: \Ell^{m}_{pe} \rightarrow \Ell^{q}_{pe}$ is $\Ell_{p}$ finite gain stable if there exists $\gamma_1, \gamma_2 \geq 0$ such that
    \begin{equation}\label{eq:l2}
        \norm{(\Gcal \bu)_{\tau}}_{\Ell_{p}} \leq \gamma_1 \norm{\bu_{\tau}}_{\Ell_{p}} + \gamma_2,
    \end{equation}
    where $\bu \in \Ell^{m}_{pe}$ and $\tau \in [0, \infty).$
\end{definition}
\begin{definition}(\textit{Small-signal $\Ell_{p}$ stability \cite{khalil2002}})
The mapping $\Gcal: \Ell^{m}_{pe} \rightarrow \Ell^{q}_{pe}$ is $\Ell_{p}$ small-signal finite-gain stable if there exists $r_u>0$ such that (\ref{eq:l2}) is satisfied for all $\bu \in \Ell^m_{pe}$ with $\sup_{0 \leq t \leq \tau}\norm{\bu(t)} \leq r_{u}.$
\end{definition}

The \ac{hji} establishes a relationship between a system's $\Ell_2$-gain and the Hamilton-Jacobi equations \cite{van19922} and is the key to relating gain and \ac{cpa} storage functions in \cite{lavaei2023L2}.
\begin{theorem} (\cite{van19922})
    Consider the smooth system $\dot{\x} = f(\x) + g(\x)\bu, \quad \y = h(\x),$ where $\x \vecdim{n}$, $\y \vecdim{p}$, $\bu \vecdim{m},$ and $f(0) = h(0) = 0.$ 
    Let $\gamma > 0$ and suppose there is a smooth, positive semi-definite function $V: \mathbb{R}^n \rightarrow \mathbb{R}$ that satisfies the \ac{hji} (presented here in \ac{lmi} form),
    \begin{equation}\label{eq:gain_lmi}
        \bmat{\nabla V^\top f(\x) & \nabla V^\top g(\x) & h^\top (\x) \\
        * & -2\gamma^2\I & 0 \\
        * & * & -2\I} \preceq 0
    \end{equation}
    for all $\x \vecdim{n}.$ Then, for all $\x_0 \vecdim{n},$ the system is $\Ell_2$ stable with gain less than or equal to $\gamma.$
\end{theorem}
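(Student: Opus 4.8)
The plan is to run the classical storage-function (dissipativity) argument. First, the \ac{lmi} (\ref{eq:gain_lmi}) will be converted into a pointwise dissipation inequality for $V$ with supply rate $s(\bu,\y)=\tfrac12\big(\gamma^2\norm{\bu}_2^2-\norm{\y}_2^2\big)$; then this inequality is integrated along trajectories, and nonnegativity of $V$ turns the resulting integral bound into the input-output estimate (\ref{eq:l2}) that defines an $\Ell_2$ gain of $\gamma$.

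\textbf{Step 1 (from the LMI to a dissipation inequality).} Since $\gamma>0$, the lower-right $(m+p)\times(m+p)$ block $\diag(-2\gamma^2\I,-2\I)$ of (\ref{eq:gain_lmi}) is negative definite, so taking its Schur complement shows that (\ref{eq:gain_lmi}) holds for all $\x$ iff the Hamilton-Jacobi inequality
\[
\nabla V(\x)^\top f(\x)+\tfrac{1}{2\gamma^2}\norm{g(\x)^\top\nabla V(\x)}_2^2+\tfrac12\norm{h(\x)}_2^2\le 0
\]
holds for all $\x$. Completing the square in $\bu$ via the identity $\nabla V^\top(f+g\bu)-\tfrac12\gamma^2\norm{\bu}_2^2=\nabla V^\top f+\tfrac{1}{2\gamma^2}\norm{g^\top\nabla V}_2^2-\tfrac12\gamma^2\norm{\bu-\gamma^{-2}g^\top\nabla V}_2^2$ then gives the dissipation inequality
\[
\nabla V(\x)^\top\big(f(\x)+g(\x)\bu\big)\le \tfrac12\gamma^2\norm{\bu}_2^2-\tfrac12\norm{h(\x)}_2^2,\qquad \forall\,\x,\ \forall\,\bu .
\]

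\textbf{Step 2 (integrate and use $V\ge 0$).} Fix $\x_0$ and an input $\bu$, let $\x(\cdot)$ be the corresponding trajectory on its interval of existence, and put $\y=h(\x)$. Along the trajectory $\tfrac{d}{dt}V(\x(t))=\nabla V(\x(t))^\top\dot{\x}(t)$, which Step 1 bounds above by $\tfrac12\gamma^2\norm{\bu(t)}_2^2-\tfrac12\norm{\y(t)}_2^2$. Integrating on $[0,\tau]$ and using $V(\x(\tau))\ge 0$ together with finiteness of $V(\x_0)$ yields $\int_0^\tau\norm{\y(t)}_2^2\,dt\le\gamma^2\int_0^\tau\norm{\bu(t)}_2^2\,dt+2V(\x_0)$. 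Taking square roots and using $\sqrt{a+b}\le\sqrt{a}+\sqrt{b}$ gives $\norm{\y_{\tau}}_{\Ell_2}\le\gamma\norm{\bu_{\tau}}_{\Ell_2}+\sqrt{2V(\x_0)}$ for every $\tau$, which is exactly (\ref{eq:l2}) with $\gamma_1=\gamma$ and $\gamma_2=\sqrt{2V(\x_0)}$; hence $\bu\mapsto\y$ is $\Ell_2$-stable with gain at most $\gamma$.

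\textbf{Main obstacle.} There is no deep step, but two places need care. The first is the bookkeeping in Step 1: the factors of $2$ in (\ref{eq:gain_lmi}) must be tracked so that eliminating the lower-right block contributes precisely the weights $\tfrac{1}{2\gamma^2}$ and $\tfrac12$, and the hypothesis $\gamma>0$ is genuinely used to ensure that block is (negative) definite and hence invertible. The second is well-posedness in Step 2: a priori the trajectory $\x(\cdot)$ exists only on a maximal interval, so the estimate must first be stated there; the same inequality shows $V(\x(t))$ stays finite on that interval, and when the sublevel sets of $V$ are bounded this forces global existence, so the truncated bound (\ref{eq:l2}) is meaningful for all $\tau\ge 0$ in the extended space $\Ell^{m}_{2e}$.
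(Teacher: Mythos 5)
Your proof is correct: the Schur complement of the lower-right block $\mathrm{diag}(-2\gamma^2\I,-2\I)$ recovers the Hamilton--Jacobi inequality with the right weights $\tfrac{1}{2\gamma^2}$ and $\tfrac12$, the completion-of-squares identity in $\bu$ checks out, and integrating the resulting dissipation inequality with $V\ge 0$ yields (\ref{eq:l2}) with $\gamma_1=\gamma$ and $\gamma_2=\sqrt{2V(\x_0)}$. Note that the paper does not prove this theorem at all --- it is quoted from \cite{van19922} --- and your argument is precisely the standard storage-function proof given in that reference, so there is nothing to reconcile; your added caution about maximal intervals of existence is a reasonable refinement rather than a gap.
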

The storage function solution to (\ref{eq:gain_lmi}) is actually only required to be locally bounded and positive semi-definite, rather than smooth \cite[Theorem 3.1]{james1993partial}. 

To verify (\ref{eq:gain_lmi}), this paper synthesizes a \ac{cpa} storage function that is only defined on the bounded set $\Omega \in \mathfrak{R}^n.$  As established by \cite{lavaei2023L2}, the $\Ell_2$-gain can then only be found on a subset of $\Omega$, so the small-signal properties of the system are used to ensure the system is unable to leave the subset. This is accomplished by determining a modified \ac{cpa} barrier function (Theorem 2, \cite{lavaei2023L2}).

\section{MAIN RESULTS}

\acp{lmi} are a valuable tool used to solve complex control analysis or synthesis problems through optimization. This paper develops novel error bounds for an \ac{lmi} on an $n$-simplex, so that this tool can be leveraged for \ac{cpa}-analysis. Additionally, an \ac{lmi} to enforce constraints on a closed ball about the origin is developed to expand the type of systems \ac{cpa}-analysis can be applied to. These bounds are then used to design two convex optimization problems that bound the $\Ell_2$-gain for a given region of the state space. Both optimization problems search for storage functions that satisfy (\ref{eq:gain_lmi}), but the second problem uses a discontinuous quadratic and \ac{cpa} storage function to include systems with linear control affine terms (systems with a nonzero $\B$ matrix).

\subsection{LMI Error Bounds}

This section first develops a positive definite error bound matrix for an \ac{lmi} constraint applied to an $n-$simplex. Enforcing the \ac{lmi} constraint plus its error bound on the vertex points of an $n$-simplex ($\x_0,...,\x_n \in \sigma$) implies that the \ac{lmi} holds for all points within that simplex ($\x \in \sigma$). While \cite{giesl2012construction} established a general error bound for $\mathcal{C}^2$ vector-valued functions (\autoref{lemma_eq_bound}), it does not translate automatically to an \ac{lmi} that contains $\mathcal{C}^2$ vector-valued functions. The structure of an \ac{lmi} affects its definiteness and is considered in the following theorem.
\begin{theorem}\label{thm:lmi_bound}
Consider
\begin{equation}\label{eq:genericMat}
    \M(\x) = \bmat{\phi(\x) & \zeta^\top (\x) \\ \zeta(\x) & -\I},
\end{equation}
where $\x \vecdim{n}$, $\phi: \mathbb{R}^n \rightarrow \mathbb{R},$ $\zeta: \mathbb{R}^n \rightarrow \mathbb{R}^m$, $\phi,\zeta\in \mathcal{C}^2$, and $\zeta^{(k)}(\x)$ is the $k^{th}$ element of $\zeta$. Let $\sigma := \text{co}\{\x_{j}\}_{j=0}^n$ be an n-simplex in $\mathbb{R}^n$. If $\x = \sum_{i=0}^{n}\lambda_{j}\x_{j} \in \sigma,$ then
    \begin{align}\label{eq:lmi_bound}
        \M(\x) - \sum_{j=0}^{n}\lambda_{j}\M(\x_{j}) \preceq &  
         \sum_{{j} = 0}^n\lambda_{j}\!\!\bmat{\frac{1}{2}(\beta c_{j} {+} \!\!\sum\limits_{k=1}^m\!\!\mu_k^2 c_{j}^2) & 0 \\ * & \frac{1}{2}\I} \nonumber\\
         =& \sum_{{j} = 0}^n\lambda_j\E(\x_{j}) \defeqRight \E(\x),
    \end{align}
    where 
    \begin{equation}\label{eq:c}
        c_{j} = n\max_{\upsilon \in \mathbb{Z}_0^n}\norm{\x_{{j}} {-} \x_{\upsilon}}_2^2,
    \end{equation}
    \normalsize
    \begin{equation}
        \beta \geq \max_{q,r \in\mathbb{Z}_1^n, \xi \in \sigma} \absVal{\frac{\partial^2\phi(\x)}{\partial \x ^{(q)} \partial \x ^{(r)}}\Bigr|_{\x = \xi}}, \text{ and }
    \end{equation}
    \begin{equation}
         \mu_{k} \geq \max_{q,r \in\mathbb{Z}_1^n, \xi \in \sigma} \absVal{\frac{\partial^2\zeta^{(k)}(\x)}{\partial \x ^{(q)}\partial \x ^{(r)}}\Bigr|_{\x = \xi}}.
    \end{equation}
Moreover, if $\M(\x_{j}) +\E(\x_{j}) \preceq 0$ holds for all vertex points of $\sigma,$ then $\M(\x) \preceq 0$ for all $\x \in \sigma.$
\end{theorem}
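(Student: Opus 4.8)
The plan is to prove (\ref{eq:lmi_bound}) by a block computation plus a Schur-complement argument, and then to read off the ``moreover'' claim by convexity.

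First I would expand $\M(\x)-\sum_{j=0}^{n}\lambda_j\M(\x_j)$ block-by-block. Setting $e_\phi := \phi(\x)-\sum_{j=0}^{n}\lambda_j\phi(\x_j)\in\mathbb{R}$ and $\e_\zeta := \zeta(\x)-\sum_{j=0}^{n}\lambda_j\zeta(\x_j)\in\mathbb{R}^m$, the two $-\I$ blocks cancel because $\sum_{j=0}^{n}\lambda_j=1$, so
\[
\M(\x)-\sum_{j=0}^{n}\lambda_j\M(\x_j)=\bmat{e_\phi & \e_\zeta^\top \\ \e_\zeta & \mathbf{0}}.
\]
Next I would bound these entries by re-running the Taylor-remainder estimate behind \autoref{lemma_eq_bound}, but with the simpler constant (\ref{eq:c}). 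For a scalar $g\in\mathcal{C}^2$ on $\sigma$, Taylor's theorem at $\x$ gives $g(\x_j)=g(\x)+\nabla g(\x)^\top(\x_j-\x)+\tfrac12(\x_j-\x)^\top\nabla^2 g(\xi_j)(\x_j-\x)$ with $\xi_j\in\sigma$ by convexity of $\sigma$; weighting by $\lambda_j$, summing, and using $\sum_j\lambda_j(\x_j-\x)=\mathbf{0}$ eliminates the gradient term. Bounding the resulting Hessian quadratic forms entrywise, using $\norm{\cdot}_1^2\le n\norm{\cdot}_2^2$, and substituting $\x_j-\x=\sum_{\upsilon}\lambda_\upsilon(\x_j-\x_\upsilon)$ so that $\norm{\x_j-\x}_2\le\max_\upsilon\norm{\x_j-\x_\upsilon}_2$, yields $\absVal{g(\x)-\sum_j\lambda_j g(\x_j)}\le\tfrac12 L\sum_j\lambda_j c_j$ for any uniform bound $L$ on the second partials of $g$ over $\sigma$. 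Specializing $g$ to $\phi$ (with $L=\beta$) and to each $\zeta^{(k)}$ (with $L=\mu_k$) gives $\absVal{e_\phi}\le\tfrac12\beta\sum_j\lambda_j c_j$ and $\absVal{\e_\zeta^{(k)}}\le\tfrac12\mu_k\sum_j\lambda_j c_j$, hence $\norm{\e_\zeta}_2^2\le\tfrac14\sum_{k=1}^m\mu_k^2\big(\sum_j\lambda_j c_j\big)^2$.

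The main step is the matrix inequality. With $a:=\tfrac12\sum_j\lambda_j\big(\beta c_j+\sum_k\mu_k^2 c_j^2\big)$ denoting the $(1,1)$ entry of the right-hand side of (\ref{eq:lmi_bound}), the claim is equivalent to $\bmat{a-e_\phi & -\e_\zeta^\top \\ -\e_\zeta & \tfrac12\I}\succeq 0$. Since $\tfrac12\I\succ 0$, a Schur complement reduces this to the scalar inequality $a-e_\phi-2\norm{\e_\zeta}_2^2\ge 0$. I would close it by splitting $a$ into its two halves: $\tfrac12\sum_j\lambda_j\beta c_j-e_\phi\ge 0$ from the $\phi$-estimate above, and $\tfrac12\sum_j\lambda_j\sum_k\mu_k^2 c_j^2-2\norm{\e_\zeta}_2^2\ge 0$ because $2\norm{\e_\zeta}_2^2\le\tfrac12\sum_k\mu_k^2\big(\sum_j\lambda_j c_j\big)^2\le\tfrac12\sum_k\mu_k^2\sum_j\lambda_j c_j^2$, the last step being Jensen's inequality for the convex map $t\mapsto t^2$ with weights $\lambda_j$. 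This is the only place where the exact shape of $\E$ — the squared constants $c_j^2$ in the $(1,1)$ block and the extra $\tfrac12\I$ in the $(2,2)$ block — is used, so I expect it to be the crux.

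Finally, the ``moreover'' part is immediate: (\ref{eq:lmi_bound}) rearranges to $\M(\x)\preceq\sum_{j=0}^{n}\lambda_j\big(\M(\x_j)+\E(\x_j)\big)$, and if each $\M(\x_j)+\E(\x_j)\preceq 0$ then this convex combination (with $\lambda_j\ge 0$, $\sum_j\lambda_j=1$) is negative semidefinite, giving $\M(\x)\preceq 0$ for all $\x\in\sigma$.
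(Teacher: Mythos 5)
Your proof is correct and follows essentially the same route as the paper's: Taylor expansion about $\x$, cancellation of the zeroth- and first-order terms via $\sum_j\lambda_j(\x_j-\x)=\mathbf{0}$, and then absorbing the off-diagonal $\zeta$-remainder into the diagonal blocks — your Schur complement against the $\tfrac12\I$ block is exactly the paper's completing-the-square step $2\w_1^\top\A^\top\B\w_2\le\w_1^\top\A^\top\A\w_1+\w_2^\top\B^\top\B\w_2$ in disguise. The only organizational difference is that you bound the entries of the error matrix first and therefore need Jensen's inequality to pass from $\big(\sum_j\lambda_j c_j\big)^2$ to $\sum_j\lambda_j c_j^2$, whereas the paper squares each Hessian term inside the $j$-sum before bounding, so that step never arises; both yield the stated $\E(\x)$.
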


\autoref{thm:lmi_bound} bounds the difference between $\M(\x)$ at any convex combination of vertex points
and the convex combination of the \ac{lmi} evaluated at each vertex point, $\sum_{{j}=0}^n\lambda_{j}\M(\x_{j})$. 
The proof parallels Proposition 2.2 in \cite{giesl2012construction} by developing remainder terms using Taylor's theorem, but exploits the structure of $\M(\x)$ to establish an \ac{lmi} error bound. Note the negative identity matrix in (\ref{eq:genericMat}) is essential to enforce negative semi-definiteness.

\begin{proof}
By definition, any point $\x \in \sigma$ can be written as a convex combination of the vertices, i.e., $\x = \sum_{{j}=0}^n\lambda_{j}\x_{j}$. Applying Taylor's Theorem \cite[Theorem 14.20]{fitzpatrick2009advanced} to $\phi$ and $\zeta$ about $\x$ for each vertex point in $\sigma$ results in 
\begin{align*}
    \sum_{{j}=0}^n\lambda_{j} \phi(\x_{j}) 
    {=} \sum_{{j}=0}^n\lambda_{j} \Bigl(&\phi(\x) {+} \langle \nabla \phi(\x),\Delta\x_{j}\rangle \\
    &{+} \langle \bH_{\phi}(\z_{{j},\phi})\Delta\x_{j},\Delta\x_{j} \rangle\Bigr)
\end{align*}
and
\begin{flalign*}
    \sum_{{j}=0}^n\lambda_{j} \zeta^{(k)}(\x_{j}) 
    {=}& \sum_{{j}=0}^n\lambda_{j} \Bigl(\zeta^{(k)}(\x) {+} \langle \nabla \zeta^{(k)}(\x),\Delta\x_{j}\rangle \\
    &{+} \frac{1}{2}\langle \bH_{\zeta^{(k)}}(\z_{{j},\zeta^{(k)}})\Delta\x_{j},\Delta\x_{j} \rangle\Bigr).
\end{flalign*}
Here, $\Delta\x_{j}=\x_{j} - \x$, $\bH_{\phi}$ is the Hessian for $\phi$, and $\z_{{j},\phi}$ and $\z_{{j},\zeta}$ are each some convex combination of $\x_{j}$ and $\x$. Because $\zeta$ is a vector-valued function, each dimension of $\zeta(\x) \vecdim{m}$ is separately expanded -- represented as element $\zeta^{(k)}(\x)$ with a corresponding Hessian $\bH_{\zeta^{(k)}}$ for $k =1,...,m$. 

Let $\tilde{\E}(\x){=} \M(\x) {-} \sum_{{j}=0}^{n}\lambda_{j}\M(\x_{j})$. In $\tilde{\E}(\x)$, the summed zeroth order terms of each function's Taylor expansion in $\sum_{{j}=0}^{n}\lambda_{j}\M(\x_{j})$ cancel with the corresponding terms in $\M(\x).$ Furthermore, the summed first order terms of each expansion become zero, because $\sum_{j = 0}^n\lambda_j\langle \nabla f(\x), \x_j - \x\rangle$ $= \langle \nabla f(\x), \sum_{j = 0}^n\lambda_j\x_j - \x\rangle$ $ = \langle \nabla f(\x),0 \rangle$. Altogether,
\begin{flalign*}
    \tilde{\E}(\x) 
    =& \frac{1}{2}\sum\limits_{{j}=0}^n\lambda_{j}
    \bmat{{-}\langle  \bH_{\phi}(\z_{{j},\phi})\Delta \x_{j}, \Delta \x_{j}\rangle\! &\!\! *\!& \!\!\hdots\!\! & \!*\\
    {-}\langle \bH_{{\zeta}^{(1)}}\!(\z_{{j},\zeta^{(1)}})\Delta \x_{j}, \Delta \x_{j}\rangle\! &\!\!0\! &\!\!\hdots\!\! & \!0\\
    \vdots &\! \vdots\! & \!\!\ddots\!\! & \!\vdots \\
    {-}\langle  \bH_{{\zeta}^{(m)}}\!(\z_{{j},\zeta^{(m)}})\Delta \x_{j}, \Delta \x_{j}\rangle\! & \!\!0\! &\!\!\hdots\!\!& \!0}\!.
\end{flalign*}

Consider the definition of negative definiteness. Let $\w \!\!= \!\!\bmat{\w_1 \!\!&\!\! \w_2^\top}\!^\top\!\!\!$, where $\w_1 \!\vecdim{1}$ and $\w_2\! \vecdim{m}\!.$ Then,
\begin{flalign*}\label{eq:bound2}
\begin{split}
    \w^\top \tilde{\E}(\x)\w = \sum_{{j}=0}^n&\!\lambda_{j}\!\Bigl[{-}\frac{1}{2}\w_1^\top\! \langle \bH_{\phi}(\z_{{j}, \phi})\Delta \x_{j}, \Delta \x_{j}\rangle \w_1\\
    &  {+}
    \w_2^\top\!\!\bmat{\!\!{-}\langle \bH_{{\zeta}^{(1)}}(\z_{{j},\zeta^{(1)}})\Delta \x_{j}, \Delta \x_{j}\rangle\!\\ \vdots \\ \!{-} \langle\bH_{{\zeta}^{(m)}}(\z_{{j},\zeta^{(m)}})\Delta \x_{j}, \Delta \x_{j}\rangle\!}\!\!\w_1\!\Bigr]\!.
\end{split}
\end{flalign*}
 Whether $\w^\top\tilde{\E}(\x)\w$ is positive or negative depends on both the values within $\tilde{\E}(\x)$ and $\w$ itself, because of cross terms. By completing the square on the cross terms $(2\w_1^\top\A^\top\B\w_2\leq\w_1^\top \A^\top\A\w_1 + \w_2^\top\B^\top\B\w_2)$, this dependency is removed to bound $\tilde{\E}(\x)$ for all $\w \vecdim{m+1}$:
\begin{flalign*}
    \begin{split}
        &\w^\top\tilde{\E}({\x})\w \leq
        \frac{1}{2}\!\sum_{{j}=0}^n\lambda_{j}\Bigl[ \w_1^\top\Bigl(\!{-}\langle   \bH_{\phi}(\z_{{j}, \phi})\Delta \x_{j}, \Delta \x_{j}  \rangle \ \\
        &{+} \sum_{k=1}^m\Bigl[\langle\bH_{{\zeta^{(k)}}}\!(\z_{{j},\zeta^{(k)}})\Delta \x_{j}, \Delta \x_{j}\rangle^2\Bigr]\Bigr)\w_1\Bigr] + \w_2^\top\frac{1}{2}\I\w_2.
    \end{split}
\end{flalign*}
The expression above can then be simplified by applying the Cauchy-Schwarz inequality, Lemma 2.3 of \cite{giesl2012construction}, and the bound $\norm{\Delta \x_{j}}_2^2 \leq \max_{\upsilon\in\mathbb{Z}_0^n}\norm{\x_{j} - \x_{\upsilon}}_2^2$ to produce the final upper bound on $\w^\top \tilde{\E}(\x)\w$,
\begin{equation*}
    \w_1^\top \frac{1}{2}\sum_{{j}=0}^n\lambda_{j}\Bigl( \beta c_{{j}} + \sum_{k=1}^m(\mu_k^2c_{{j}}^2)\Bigr)\w_1  + \w_2^\top \frac{1}{2}\I\w_2.
\end{equation*}
Hence, $\w^\top \tilde{\E}(\x)\w \leq \w^\top \E(\x)\w$ for all $ \w\vecdim{m+1}$, implying (\ref{eq:lmi_bound}). 

Now suppose that $\M(\x) \preceq 0$ must be imposed on all $\x \in \sigma.$ By assumption, $\M(\x_{j}) + \E(\x_{j}) \preceq 0$ holds for each vertex of $\sigma$  $(\x_0, \hdots, \x_n).$ The set of negative semi-definite \acp{lmi} is a convex cone \cite{boyd2004convex}. By enforcing $\M(\x_{j}) + \E(\x_{j}) \preceq 0$ on each vertex, the expression $\sum_{{j}=0}^n\lambda_{j}(\M(\x_{j}) + \E(\x_{j})) \preceq 0$ also holds. The \ac{lmi} $\M(\x) + \E(\x) \preceq 0$ implies $\M(\x)\preceq 0,$ because $\E(\x)\succeq 0$. Therefore, $\M(\x) \preceq 0$ for all $\x \in \sigma.$
\end{proof}
\begin{theorem}\label{thm:originLMI}
    Consider the inequality
    \begin{equation}\label{eq:genIneqal}
         \zeta(\x)^\top\zeta(\x)+ \frac{1}{2}\left(\x^\top \theta(\x) + \theta(\x)^\top\x\right) \leq 0,
    \end{equation}
    where $\x \vecdim{n}$, $\theta:\mathbb{R}^n\rightarrow\mathbb{R}^n,$ $\zeta:\mathbb{R}^n\rightarrow\mathbb{R}^m,$ $\theta, \zeta \in \mathcal{C}^2,$ $\theta(0)  =0, $ and $\zeta(0) = 0.$  Let $B_{\epsilon}(0)$ be a closed ball about the origin with radius $\epsilon,$
    \begin{align}
        \beta_{\epsilon} \geq & \max_{p,q,r \in\mathbb{Z}_1^n, \xi \in B_{\epsilon}(0)}\absVal{\frac{\partial^2\theta^{(p)}(\x)}{\partial \x ^{(q)}\partial \x ^{(r)}}\Bigr|_{\x = \xi}} \text{, }\\
        \mu_{\epsilon} \geq & \max_{p \in\mathbb{Z}_1^m, q,r \in\mathbb{Z}_1^n, \xi \in B_{\epsilon}(0)}\absVal{\frac{\partial^2\zeta^{(p)}{(\x)}}{\partial \x ^{(q)}\partial \x ^{(r)}}\Bigr|_{\x = \xi}},
    \end{align}
    and $\mathbf{J}_{\theta}(0)$ and $\mathbf{J}_{\zeta}(0)$ be the Jacobian of $\theta$ and $\zeta$, respectively, evaluated at $\x = 0.$ If 
    \begin{flalign}\label{eq:genEpLMI}
        &\M_{\epsilon} {=} \!\!\bmat{\frac{1}{2}(\mathbf{J}_{\theta}(0)^\top\!\!{+} \mathbf{J}_{\theta}(0){+}(\epsilon n^{\frac{3}{2}}\beta_{\epsilon} {+}\epsilon^2 n^2m\mu^2_{\epsilon})\I)\!& \!\mathbf{J}_\zeta (0)^{\top} \\ * \!&\! -\frac{1}{2}\I} \!\!\preceq \!0 
    \end{flalign}
    holds, then (\ref{eq:genIneqal}) holds for all $x \in B_{\epsilon}(0)$.
\end{theorem}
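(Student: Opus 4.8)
The plan is to reduce the scalar inequality (\ref{eq:genIneqal}) to a single quadratic form $\x^\top\Xi\x\le 0$ and then to recognize $\Xi\preceq 0$ as the Schur complement of the block inequality (\ref{eq:genEpLMI}).

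First I would invoke Taylor's theorem \cite[Theorem 14.20]{fitzpatrick2009advanced} componentwise on $\theta$ and $\zeta$ about the origin. Since $\theta(0)=0$ and $\zeta(0)=0$, this gives $\theta(\x)=\mathbf{J}_\theta(0)\x+\R_\theta(\x)$ and $\zeta(\x)=\mathbf{J}_\zeta(0)\x+\R_\zeta(\x)$, where the $p$-th components of the remainders are $\frac{1}{2}\x^\top\bH_{\theta^{(p)}}(\z)\x$ and $\frac{1}{2}\x^\top\bH_{\zeta^{(p)}}(\z)\x$ for points $\z$ on the segment from $0$ to $\x$, hence in $B_\epsilon(0)$. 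Using $|\x^\top\bH\x|\le(\max_{q,r}|\bH_{q,r}|)\|\x\|_1^2\le(\max_{q,r}|\bH_{q,r}|)\,n\|\x\|_2^2$ together with the definitions of $\beta_\epsilon$ and $\mu_\epsilon$, and combining the component bounds, yields $\|\R_\theta(\x)\|_2\le\frac{1}{2}n^{3/2}\beta_\epsilon\|\x\|_2^2$ and $\|\R_\zeta(\x)\|_2\le\frac{1}{2}n\sqrt{m}\,\mu_\epsilon\|\x\|_2^2$ for all $\x\in B_\epsilon(0)$.

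Next I would substitute these expansions into the left-hand side of (\ref{eq:genIneqal}). The $\theta$-term contributes $\frac{1}{2}\x^\top(\mathbf{J}_\theta(0)+\mathbf{J}_\theta(0)^\top)\x+\x^\top\R_\theta(\x)$, and the Cauchy--Schwarz inequality with $\|\x\|_2\le\epsilon$ bounds $\x^\top\R_\theta(\x)$ by $\frac{1}{2}\epsilon n^{3/2}\beta_\epsilon\|\x\|_2^2$. For the $\zeta$-term I would use $(a+b)^\top(a+b)\le 2a^\top a+2b^\top b$ with $a=\mathbf{J}_\zeta(0)\x$ and $b=\R_\zeta(\x)$, giving $\zeta(\x)^\top\zeta(\x)\le 2\x^\top\mathbf{J}_\zeta(0)^\top\mathbf{J}_\zeta(0)\x+2\|\R_\zeta(\x)\|_2^2\le 2\x^\top\mathbf{J}_\zeta(0)^\top\mathbf{J}_\zeta(0)\x+\frac{1}{2}\epsilon^2 n^2 m\mu_\epsilon^2\|\x\|_2^2$, again using $\|\x\|_2\le\epsilon$. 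Collecting all terms, the left-hand side of (\ref{eq:genIneqal}) is at most $\x^\top\Xi\x$ with
\[
\Xi=2\mathbf{J}_\zeta(0)^\top\mathbf{J}_\zeta(0)+\frac{1}{2}\big(\mathbf{J}_\theta(0)+\mathbf{J}_\theta(0)^\top+(\epsilon n^{3/2}\beta_\epsilon+\epsilon^2 n^2 m\mu_\epsilon^2)\I\big).
\]

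Finally I would close the argument with a Schur complement: because $-\frac{1}{2}\I\prec 0$, the inequality $\M_\epsilon\preceq 0$ in (\ref{eq:genEpLMI}) is equivalent to $\Xi\preceq 0$ (the Schur complement of its $(2,2)$ block, using $(-\frac{1}{2}\I)^{-1}=-2\I$). Hence $\x^\top\Xi\x\le 0$ for every $\x$, and for $\x\in B_\epsilon(0)$ the chain of inequalities above then gives (\ref{eq:genIneqal}). I expect the only genuine difficulty to be bookkeeping of constants: tracking the dimension factors $n^{3/2}$ and $n^2m$, the factor $2$ introduced by the $(a+b)^\top(a+b)$ split, and the places where $\|\x\|_2\le\epsilon$ is used to trade $\|\x\|_2^3$ and $\|\x\|_2^4$ for $\|\x\|_2^2$, so that the remainder contributions assemble exactly into the diagonal term $(\epsilon n^{3/2}\beta_\epsilon+\epsilon^2 n^2m\mu_\epsilon^2)\I$ appearing in (\ref{eq:genEpLMI}).
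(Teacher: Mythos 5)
Your proposal is correct and follows essentially the same route as the paper: a second-order Taylor expansion of $\theta$ and $\zeta$ about the origin, remainder bounds in terms of $\beta_\epsilon$, $\mu_\epsilon$, $n$, $m$, and $\epsilon$ on $B_\epsilon(0)$, and a final Schur complement identifying the resulting quadratic form with $\M_\epsilon$. The only cosmetic difference is that you absorb the $\zeta$-remainder via $(a+b)^\top(a+b)\le 2a^\top a+2b^\top b$ directly in the scalar inequality, whereas the paper first writes the exact $\x$-dependent LMI and applies Young's relation to the cross term in its quadratic form; the two manipulations are equivalent and yield identical constants.
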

\begin{proof}
    For vector-valued functions, Taylor's theorem can be applied to each dimension. By applying Taylor's Theorem about the origin,
    \begin{flalign*}
    \zeta(\x)=&\mathbf{J}_{\zeta}(0)\x {+} \frac{1}{2}\!\!\!\bmat{\!\x^\top \bH_{\zeta^{(1)}}(\z_{\zeta^{(1)}}\!)\!\\ \vdots \\\x^\top\bH_{\zeta^{(m)}}(\z_{\zeta^{(m)}}\!)\!}\!\x {\defeqRight} (\A_1 {+} \frac{1}{2}\A_2)\x {\defeqRight} \A\x,\\
    \text{and}\\
    \theta(\x)=&\mathbf{J}_{\theta}(0)\x{+} \frac{1}{2}\!\!\!\bmat{\!\x^\top\bH_{\theta^{(1)}}(\z_{\theta^{(1)}})\!\\ \vdots \\ \x^\top\bH_{\theta^{(n)}}(\z_{\theta^{(n)}})\!}\!\x{ \defeqRight} (\B_1 {+} \frac{1}{2}\B_2)\x {\defeqRight} \B\x,
    \end{flalign*}
    where each $\z_{\zeta^{(k)}}$ and $\z_{\theta^{(r)}}$ are each some convex combination of $\x$ and $0$ for $k = 1,\hdots, m$ and $r = 1,\hdots, n.$
    Then, (\ref{eq:genIneqal}) is equivalently expressed as $\x^\top\A^\top\A\x + \black{\frac{1}{2}}(\x^\top\B\x{+}\black{\x^\top\B^\top\x}) \leq 0$. Noting that scalars equal their own transposes, factoring out $\x^\top$ and $\x$ and performing a Schur complement \cite{boyd2004convex} on $\A^\top\A$ results in the \ac{lmi}
    \begin{equation*}
        \tilde{\M}_{\epsilon}=\bmat{\black{\frac{1}{2}}(\B{+}\black{\B^\top)} & \A^\top \\ \A&-\I }\preceq 0.
    \end{equation*}
    Both $\A_2$ and $\B_2$ contain $\x$. Therefore, an infinite number of constraints are needed to enforce $\tilde{\M}_{\epsilon} \preceq 0$ for all $\x \in B_{\epsilon}(0).$
    
    By definition, the \ac{lmi} $\tilde{\M}_{\epsilon}\preceq 0$ is equivalent to $\w_1^\top \black{\frac{1}{2}(\B+\B^\top)}\w_1 {+} 2\w_1^\top\A^\top_1\w_2 {+}\w_1^\top\A^\top_2\w_2 {-} \w_2^\top\I\w_2 {\leq} 0$ holding for all $\w {=}\! \bmat{\w_1^\top & \w_2^\top}^\top,$ where $\w_1\!\!\in\!\!\mathbb{R}^{n}$ and $\w_2\!\!\in\!\! \mathbb{R}^{m}$. Like in \autoref{thm:lmi_bound}, problematic off-diagonal terms, in this case $\A_2,$ can be bounded above via Young's relation \cite{caverly2019lmi} to get the inequality $\w_1^\top\frac{1}{2}(\B {+}\black{\B^\top} {+}\A_2^\top\A_2)\w_1 {+} 2\w_1^\top \A^\top_1\w_2 {-} \w_2^\top\frac{1}{2}\I\w_2 \leq 0.$
    The terms $\A_2^\top\A_2$ and $\B_2$ are then bounded above using \black{the definition of the matrix two-norm and the  Cauchy-Schwarz inequality}. Lemma 2.3 from \cite{giesl2012construction} is then applied to the norm of each Hessian, and the definition of $B_{\epsilon}(0)$ is used to produce an upper bound on (\ref{eq:genIneqal}),
    \begin{flalign*}
        &\w_1^\top \frac{1}{2}\Bigl((\B_1{+}\black{\B_1^\top}) {+}(\epsilon n^{\frac{3}{2}}\beta_{\epsilon} +\epsilon^2 n^2 m\mu^2_{\epsilon})\I\Bigr)\w_1\\&{+} 2\w_1^\top\A^\top_1\w_2  {-} \w_2^\top\frac{1}{2}\I\w_2 \leq 0,
    \end{flalign*} which is equivalent to (\ref{eq:genEpLMI}). Then, $\M_{\epsilon} \preceq 0$ implies $\tilde{\M}_{\epsilon} \preceq 0,$ which is equivalent to (\ref{eq:genIneqal}).
\end{proof}

\subsection{\texorpdfstring{$\Ell_2$}{L2}-gain Analysis}
This section develops convex optimization problems to bound an input-affine system's $\Ell_2$-gain. Theorem \ref{thm:L2analysisOpt} covers when the input-affine term disappears at the origin, while Theorem \ref{thm:L2analysisOpt_B} allows it to remain nonzero at the origin. This distinction is important, as all error bounds in the optimization problem must be zero at the origin to prevent infeasability. While this always occurs when the input term disappears, a modified quadratic and \ac{cpa} storage function (which is quadratic at the origin) is needed for the nonzero case.

\begin{theorem}\label{thm:L2analysisOpt}
    Consider the constrained mapping $\Gcal: \Ell^{m}_{2e} \rightarrow \Ell^{p}_{2e}$ defined by $\y = \Gcal \bu,$
    \begin{flalign}\label{eq:dynSys}
        &\Gcal\!:\!
        \begin{cases}
            \dot{\x} {=} f(\x) {+} (\B {+} g(\x))\bu & \x \!\in\! \Xcal \!\in\! \mathfrak{R}^n\!, \bu \!\in\! \mathcal{U} \!\in\! \mathfrak{R}^m \\
            \y {=} h(\x),
        \end{cases}
    \end{flalign}
    where $f: \mathbb{R}^n \rightarrow \mathbb{R}^n$, $h: \mathbb{R}^n \rightarrow \mathbb{R}^p$, $\B \vecdim{n \times m}$,  $g$ is a $n\times m$ matrix where each $k$th column $g_k:\mathbb{R}^n \rightarrow \mathbb{R}^n$, and $f(0) = 0$, $g(0) = 0$, and $h(0) = 0.$ Let $\mathcal{U} = \{\bu \in \Ell^m_{2e} | \sup_{0 \leq t \leq \tau}\norm{\bu(t)}_{\infty} \leq r_u\}$ for some $r_u >0$ that ensures $\x$ remains in subset $\Omega$ of the state space \cite[Theorems 2, 3]{lavaei2023L2}. Let $\B = 0$. Suppose that $f, g, h \in \mathcal{C}^2$ for a triangulation, $\Tcal =\{\sigma_i\}_{i=1}^{m_{\Tcal}}$ of a set $\Omega \in \mathfrak{R}^n.$ Define a candidate \ac{cpa} storage function, $V = \{V_{\x}\}_{\x \in \mathbb{E}_{\Tcal}}.$ Consider the optimization problem
    \begin{equation*}
        \min_{\V, \alpha, \mathbf{L}} \alpha
    \end{equation*}
    \vspace{-6.5mm}
    \begin{subequations}\label{eq:optGamma}
        \begin{align}
            &\alpha > 0, \\
            &V_{\x} \geq 0 \quad \forall \x \in \mathbb{E}_{\Tcal}, \label{eq:constr_posDef}\\
            &\norm{\nabla V_i}_1 \leq l_i, \quad \forall i \in \mathbb{Z}_1^{m_{\Tcal}}, \\
            &\M_{i,j} \preceq 0, \quad \forall i \in \mathbb{Z}_1^{m_{\Tcal}}, \forall j \in \mathbb{Z}_0^n, x \neq 0,
        \end{align}
    \end{subequations}
     where $\alpha = \gamma^2$, $\mathbf{L} = \{l_i\}_{i=1}^{m_{\Tcal}} \subset \mathbb{R}^n,$ and
     \begin{flalign}\label{eq:gainUB}
     \begin{split} 
         &\M_{i,j} {=} \\&\!\!\!\!\!\!\!\!\!\!\bmat{\!\nabla V\!^\top\!  \!f(\x_{i,j}){+} \frac{1}{2}\!\Bigl(\!\beta_ic_{i,j}(1_n^\top l_i) {+} \!\!\!\sum\limits_{a=1}^p\!\!\rho_{i,a}^2c_{i,j}^2\!\Bigr)\!\!\!\!\!\!\!\!\!\!&\!\!\! *\!\!\! &\!\!\!* \!\!\!& \!\!\!*\!\!\! \\ g(\x_{i,j})^\top\!\nabla V\!\!\! &\!\!\!\!\! -2\alpha\I +\frac{1}{2}\I\!\!\!& \!\!\!*\!\!\! &\!\!\! * \!\!\!\\ h(\x_{i,j})\!\!\! &\!\!\! \0 \!\!\!& \!\!\!-\frac{3}{2}\I\!\!\! & \!\!\!*\!\!\! \\ (1_n^\top l_i)c_{i,j}\bmat{\mu_{i,1} \!\!& \!\hdots\! & \!\!\mu_{i,m}}^\top\! \!\!\!&\!\!\! \0\!\!\! &\!\!\! \0 \!\!\!& \!\!\!-2\I\!}\!\!.
    \end{split}
    \end{flalign}
    Further, $c_{i,j}$ is defined by (\ref{eq:oldC}) when $\sigma_i \in \Tcal_0$ and by (\ref{eq:c}) when $\sigma_i \in \Tcal_{\Omega\setminus\{0\}}$,
    \begin{align}\label{eq:beta}
        \beta_i \geq& \max_{p,q,r \in\mathbb{Z}_1^n, \xi \in \sigma_i}\absVal{\frac{\partial^2f^{(p)}(\x)}{\partial \x ^{(q)}\partial \x ^{(r)}}\Bigr|_{\x = \xi}}\\
        \label{eq:rho}
        \rho_{i,a} \geq& \max_{q,r \in\mathbb{Z}_1^n,\xi \in \sigma_i}\absVal{\frac{\partial^2h^{(a)}(\x)}{\partial \x ^{(q)}\partial \x ^{(r)}}\Bigr|_{\x = \xi}},\text{ and}\\
    \label{eq:mu}
        \mu_{i,k} \geq& \max_{p,q,r \in\mathbb{Z}_1^n, \xi \in \sigma_i}\absVal{\frac{\partial^2g^{(p)}_{k}(\x)}{\partial \x ^{(q)}\partial \x ^{(r)}}\Bigr|_{\x = \xi}},
    \end{align}
    where $a\in \mathbb{Z}_1^{p}$ represents each dimension of the output, and $k\in \mathbb{Z}_1^{m}$ represents each column of matrix $g(\x).$
    
    If the optimization problem, (\ref{eq:optGamma}), is feasible, then (\ref{eq:gain_lmi}) holds for all points in $\Omega^o$ and $\gamma^{*} = \sqrt{\alpha^*}$ is an upper bound on the $\Ell_2$-gain of $\Gcal$ in $\Omega^o.$
\end{theorem}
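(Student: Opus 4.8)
The plan is to manufacture a genuine storage function from the decision variables, use feasibility of (\ref{eq:optGamma}) to force the Hamilton--Jacobi \ac{lmi} (\ref{eq:gain_lmi}) to hold pointwise on $\Omega^o$, and then invoke the $\Ell_2$-gain theorem of \cite{van19922} together with the confinement guarantee of \cite[Theorems 2, 3]{lavaei2023L2}. For the first step, \autoref{lemma_gradW} supplies a unique \ac{cpa} interpolant $V$ of the vertex data $\{V_\x\}$ that is affine on each $\sigma_i$ with a constant gradient $\nabla V_i$ obeying $\norm{\nabla V_i}_1\le 1_n^\top l_i$ by construction; since $V(\x)=\sum_j\lambda_j V_{\x_{i,j}}$ for $\x=\sum_j\lambda_j\x_{i,j}\in\sigma_i$, constraint (\ref{eq:constr_posDef}) makes $V$ positive semi-definite, continuous, and hence locally bounded on $\Omega$ --- exactly the regularity \cite[Theorem 3.1]{james1993partial} requires to use (\ref{eq:gain_lmi}) in place of the smoothness hypothesis of \cite{van19922}. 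It then remains to prove that, with $\B=0$, the left-hand side of (\ref{eq:gain_lmi}) built from $\nabla V$, $f$, $g$, $h$ --- henceforth $\mathrm{HJI}(\x)$ --- satisfies $\mathrm{HJI}(\x)\preceq0$ for every $\x\in\Omega^o$; the \ac{cpa}/Dini machinery of \cite{lavaei2023L2} accommodates the gradient jumps across simplex faces.

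Fix $\sigma_i$ and $\x=\sum_{j=0}^n\lambda_j\x_{i,j}\in\sigma_i$. Each $\M_{i,j}$ in (\ref{eq:gainUB}) is $\mathrm{HJI}(\x_{i,j})$ augmented in two ways: the scalar slack $\tfrac12\beta_i c_{i,j}(1_n^\top l_i)+\tfrac12\sum_a\rho_{i,a}^2 c_{i,j}^2$ is added to the $(1,1)$ entry, the slack $\tfrac12\I$ is added to the $-2\alpha\I$ block and to the $-2\I$ block (making it $-\tfrac32\I$), and a fourth block row and column equal to $-2\I$ with $(1,4)$ entry $(1_n^\top l_i)c_{i,j}\,[\mu_{i,1}\ \cdots\ \mu_{i,m}]^\top$ is appended. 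Appending that fourth block rather than eliminating it is what keeps the constraint affine in $\mathbf{L}$: Schur-complementing it out would instead add $\tfrac12(1_n^\top l_i)^2 c_{i,j}^2\sum_k\mu_{i,k}^2$, quadratic in $\mathbf{L}$, to the $(1,1)$ entry. Since the negative semi-definite cone is convex \cite{boyd2004convex}, imposing $\M_{i,j}\preceq0$ at the relevant vertices gives $\sum_j\lambda_j\M_{i,j}\preceq0$, hence its Schur complement $N$ with respect to the fourth block is negative semi-definite as well.

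The core step compares $N$ against $\mathrm{HJI}(\x)$. Applying Taylor's theorem \cite[Theorem 14.20]{fitzpatrick2009advanced} about $\x$ to the scalar maps $\nabla V_i^\top f$, $\nabla V_i^\top g_k$ ($k=1,\dots,m$) and $h^{(a)}$ ($a=1,\dots,p$), exactly as in the proof of \autoref{thm:lmi_bound}, the zeroth- and first-order terms telescope and the residual $\tilde\E(\x)=\mathrm{HJI}(\x)-\sum_j\lambda_j\mathrm{HJI}(\x_{i,j})$ has only a $(1,1)$ entry and $(1,2)$, $(1,3)$ off-diagonal blocks, each a $\lambda$-weighted combination of Hessian quadratic forms. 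Completing the square (Young's relation \cite{caverly2019lmi}) on the two off-diagonal blocks with the coefficient that yields $\tfrac12\I$ moves $+\tfrac12\I$ into the second and third blocks and the squared off-diagonals into $(1,1)$; bounding the Hessian norms via Lemma 2.3 of \cite{giesl2012construction}, then applying Cauchy--Schwarz, $\norm{\nabla V_i}_1\le 1_n^\top l_i$, $\norm{\x-\x_{i,j}}_2^2\le\max_\upsilon\norm{\x_{i,j}-\x_\upsilon}_2^2$ (so $c_{i,j}$ equals (\ref{eq:c}) when $\sigma_i\in\Tcal_{\Omega\setminus\{0\}}$), and Jensen's inequality $(\sum_j\lambda_j c_{i,j})^2\le\sum_j\lambda_j c_{i,j}^2$ shows these contributions are dominated by the slacks already present in $N$ --- the external $\mu$-block being precisely what makes the $g$-term match without loss. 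Therefore $\mathrm{HJI}(\x)\preceq N\preceq0$, i.e.\ (\ref{eq:gain_lmi}) holds on $\sigma_i$.

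The delicate part, and the main obstacle, is the family $\Tcal_0$ of simplices containing the origin, because every error slack in $\M_{i,j}$ must vanish at $\x_{i,0}=0$: with $f(0)=g(0)=h(0)=0$ one has $\mathrm{HJI}(0)=\diag(0,-2\alpha\I,-2\I)$, which leaves no room for slack. This forces the switch of $c_{i,j}$ to (\ref{eq:oldC}), which gives $c_{i,0}=0$, and the restriction of $\M_{i,j}\preceq0$ to the nonzero vertices (the ``$x\neq0$'' qualifier). For these simplices one writes $\mathrm{HJI}(\x)=\lambda_0\,\mathrm{HJI}(0)+\sum_{j\ge1}\lambda_j\,\mathrm{HJI}(\x_{i,j})+\tilde\E(\x)$, uses $\mathrm{HJI}(0)\preceq0$ directly, and estimates $\tilde\E(\x)$ with the \emph{summed} remainder bound of \autoref{lemma_eq_bound} (whose cross-distance form of $c_{i,j}$ is tailored to exactly this situation) rather than a term-by-term one; because the extra slacks only make the imposed $\M_{i,j}$ more negative and $1-\lambda_0\le1$, the convex-combination argument still closes, with residual $O(\norm{\x})\to0$ as $\x\to0$. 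The remaining bookkeeping burden is to verify that the chosen Young coefficients reproduce precisely the $\tfrac12\I$, $-\tfrac32\I$ and $\mu$ blocks of (\ref{eq:gainUB}) rather than looser constants. Once (\ref{eq:gain_lmi}) holds throughout $\Omega^o$, \cite{van19922} --- in the locally-bounded form of \cite[Theorem 3.1]{james1993partial} --- bounds the $\Ell_2$-gain by $\gamma$ along every trajectory that remains in $\Omega^o$, and the choice of $\mathcal{U}$ through \cite[Theorems 2, 3]{lavaei2023L2} guarantees that confinement for every admissible input; since feasibility at value $\alpha$ yields (\ref{eq:gain_lmi}) with $\gamma^2=\alpha$, the optimizer gives the upper bound $\gamma^*=\sqrt{\alpha^*}$ on the $\Ell_2$-gain of $\Gcal$ on $\Omega^o$.
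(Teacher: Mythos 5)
Your proposal is correct and follows essentially the same route as the paper's proof: apply the Taylor-expansion error bound of \autoref{thm:lmi_bound} to the gain \ac{lmi} with $\nabla V$ factored out and bounded via Cauchy--Schwarz and $\norm{\nabla V_i}_1 \le 1_n^\top l_i$, recover (\ref{eq:gainUB}) by Schur complement of the $\mu$-term, use convexity of the negative semi-definite cone over each simplex, and conclude via \cite{van19922}, \cite[Theorem 3.1]{james1993partial}, and \cite[Theorems 2, 3]{lavaei2023L2}. Your treatment of the origin-containing simplices (the switch to (\ref{eq:oldC}) and the $x \neq 0$ qualifier) is in fact more explicit than the paper's, which only remarks that this choice forces the error bound to vanish at the origin.
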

\begin{proof}
    Note that $\beta_i,$ $\rho_{i, a}$, and $\mu_{i, k}$ exist for all simplexes, because $f, g, h \in \mathcal{C}^2$ and $\Omega$ is bounded. Furthermore, $c_{i,j}$ is finite for all $n$-simplexes in $\Tcal,$ because $\Omega \in \mathfrak{R}^n.$ Defining $c_{i,j}$ as in (\ref{eq:oldC}) when $\sigma_i \in \Tcal_{0}$ ensures that the error bound is $0$ at the origin. 

    Constraint \ref{eq:gainUB} is a result of applying \autoref{thm:lmi_bound} to the gain \ac{lmi},  (\ref{eq:gain_lmi}), and then using the Schur complement. In detail, because $V$ is a \ac{cpa} function, $\nabla V$ is computed using Lemma \ref{lemma_gradW} and is constant over each $n-$simplex. When applying \autoref{thm:lmi_bound}, the error of each function in (\ref{eq:gain_lmi}) can factor $\nabla V$ out, e.g. $\nabla V^\top f(\x) - \nabla V^\top \sum_{i=0}^n\lambda_i f(\x_i) = \nabla V^\top(f(\x) -\sum_{i=0}^n\lambda_i f(\x_i)).$ 
    From \autoref{thm:lmi_bound}, the error in the off-diagonal terms of the \ac{lmi} is bounded above by completing the square.
    The Cauchy-Schwarz inequality is applied to error terms that interact with $\nabla V,$ e.g. $\frac{1}{2}\nabla V^\top \langle \bH_f(\z)\Delta \x_i, \Delta \x_i \rangle \leq \frac{1}{2}\norm{\nabla V}_2\norm{\bH_f(\z)}_2\norm{\Delta \x_i}_2^2$. Through equivalence of norms, $\norm{\nabla V}_2 \leq \norm{\nabla V}_1.$ The \ac{lmi} (\ref{eq:gain_lmi}) and its upper bound are then defined as
    \begin{align*}
            &\bmat{\nabla V^\top f(\x_{i,j})+\tilde{e}_{i,j} & \nabla V^\top g(\x_{i,j}) & h(\x_{i,j})\\ * & -2\alpha\I +\frac{1}{2} \I & 0 \\ * & * & -\frac{3}{2}\I}, \\
    \text{where} \\ 
            \tilde{e}_{i,j} {=} &\frac{1}{2}\Bigl(\beta_ic_{i,j}(1_n^\top l_i) + \sum\limits_{a=1}^p \rho_{i,a}^2c_{i,j}^2 + \sum\limits_{k=1}^m\mu_{i, k}^2c_{i,j}^2(1_n^\top l_i)^2\Bigr).
    \end{align*}
    Perform a Schur complement \cite{boyd2004convex} about 
    $\frac{1}{2}\sum\limits_{k=1}^m\mu_{i, k}^2c_{i,j}^2(1_n^\top l_i)^2$ to get the equivalent \ac{lmi}, (\ref{eq:gainUB}). 
    
    By enforcing Constraint \ref{eq:constr_posDef} on each $n-$simplex, $V(\x)$ is a viable storage function for $\Gcal$ in the region $\Omega$. From \autoref{thm:lmi_bound}, (\ref{eq:gain_lmi}) will hold for all $\x \in \Omega^o$ for $\gamma^*.$ 
\end{proof}

\begin{theorem}\label{thm:L2analysisOpt_B}
    Consider (\ref{eq:dynSys}). Suppose that $f, g, h \in \mathcal{C}^2$ for set $\Omega \in \mathfrak{R}^n.$ Let $\mathcal{U} {=} \{\bu \in \Ell^m_{2e} | \sup_{0 \leq t {\leq} \tau}\norm{\bu(t)}_{\infty} {\leq} r_u\}$ for some $r_u >0$ that ensures $\x$ remains in subset $\Omega$ of the state space \cite[Theorems 2, 3]{lavaei2023L2}. Let $B_\epsilon(0)$ be some ball about the origin with radius $\epsilon,$ and let $\widehat{\Tcal}\! =\{\sigma_i\}_{i=1}^{m_{\widehat{\Tcal}}}$ be a triangulation of $\Omega\setminus B_{\epsilon}(0)^{\circ}$. Define a candidate storage function,
    \begin{equation}\label{eq:newLyap_B}
        V(\x) = \begin{cases}
            \x^\top \bP \x & \norm{\x}_2 \leq \epsilon \\
            \bar{\V}(\x) & \norm{\x}_2 > \epsilon,
        \end{cases}
    \end{equation}
    where the function is a quadratic near the origin with $\bP = \bP^\top \vecdim{n \times n}$ and then, at $\norm{\x}> \epsilon,$ becomes the \ac{cpa} function $\bar{\V} = \{\bar{V}_{\x}\}_{\x \in \mathbb{E}_{\widehat{\Tcal}}}$.
    Consider the optimization problem 
    \begin{equation*}
        \min_{\V, \bP, \alpha, \mathbf{L}, l_p} \alpha
    \end{equation*}
    \vspace{-6.5mm}
    \begin{subequations}\label{eq:optGamma_quad}
        \begin{align}
            &\alpha > 0, \\
            &\bP \succ 0, \\
            &\norm{\bP}_2 \leq l_p, \\
            &\M_{\epsilon} \preceq 0, \\
            &V_\x \geq 0 \quad \forall \x \in \mathbb{E}_{\widehat{\Tcal}}, \\
            &\norm{\nabla V_i}_1 \leq l_i, \quad \forall i \in \mathbb{Z}_1^{m_{\widehat{\Tcal}}}, \\
            &\M_{i,j} \preceq 0, \quad \forall i \in \mathbb{Z}_1^{m_{\widehat{\Tcal}}}, \forall j \in \mathbb{Z}_0^n,
        \end{align}
    \end{subequations}
    where $\alpha = \gamma^2$, $\mathbf{L} = \{l_i\}_{i=1}^{m_{\widehat{\Tcal}}}\subset \mathbb{R}^n,$ and $\M_\epsilon$ is defined as
     \begin{flalign}\label{eq:lmiEp}
         \begin{split}
             &\!\!\!\bmat{ \omega \!\!\!\!\!\! & \bP  \B \!\!\!& \!\!\!\J_h(0)^\top\!\!\! & l_p\epsilon \sum\limits_{k=1}^m\norm{\J_{g_k}(0)}_2 \I & l_p n^{\frac{3}{2}}m^{\frac{1}{2}}\mu_{\epsilon}\epsilon^2\I 
             \\  * \!\!\!\!& (-\frac{\alpha}{2} + \frac{3}{2})\I & \0 & \0 & \0
             \\ * \!\!\!\!& * & -\frac{3}{2}\I & \0 & \0 
             \\ * \!\!\!\!&* &* & -\I & \0
             \\ *\!\!\!\!&* & *& * & -2\I}\!\!.
         \end{split}
     \end{flalign}
     Here, $\omega = \bP\J_f(0) + \J_f(0)^\top\bP + (l_p\epsilon n^{\frac{3}{2}}\beta_{\epsilon} + \frac{1}{2}\epsilon^2 n^2p \rho_{\epsilon}^2)\I,$ $\J_q(0)$ represents the Jacobian of the function, $q$, evaluated at zero, 
     \begin{equation}\label{eq:betaEp}
        \beta_{\epsilon} \geq \max_{p,q,r \in\mathbb{Z}_1^n,\xi \in B_{\epsilon}(0)}\absVal{\frac{\partial^2f^{(p)}(\x)}{\partial \x ^{(q)}\partial \x ^{(r)}}\Bigr|_{\x = \xi}},
    \end{equation}
    \begin{equation}\label{eq:rhoEp}
        \rho_{\epsilon} \geq \max_{p \in\mathbb{Z}_1^p, q,r \in\mathbb{Z}_1^n, \xi \in B_{\epsilon}(0)}\absVal{\frac{\partial^2h^{(p)}(\x)}{\partial \x ^{(q)}\partial \x ^{(r)}}\Bigr|_{\x = \xi}},
    \end{equation}
    and
    \begin{equation}\label{eq:muEp}
        \mu_{\epsilon} \geq \max_{k\in \mathbb{Z}_1^{m}, p,q,r \in\mathbb{Z}_1^n, \xi \in B_{\epsilon}(0)}\absVal{\frac{\partial^2g^{(p)}_{k}(\x)}{\partial \x ^{(q)}\partial \x ^{(r)}}\Bigr|_{\x = \xi}}.
    \end{equation}
    Further, $c_{i,j}$ is given in (\ref{eq:c}), $\beta_i, \rho_{i,a}$ and $\mu_{i,k}$ are given in (\ref{eq:beta}), (\ref{eq:rho}), and (\ref{eq:mu}), and the \ac{lmi} $\M_{i,j}$ is defined by (\ref{eq:gainUB}).

     If the optimization problem, (\ref{eq:optGamma_quad}), is feasible, then (\ref{eq:gain_lmi}) holds for all points in $\Omega^o$ and $\gamma^{*} = \sqrt{\alpha^*}$ is an upper bound on the $\Ell_2$-gain of $\Gcal$ in $\Omega^o.$
\end{theorem}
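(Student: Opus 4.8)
The plan is to show that a feasible point of (\ref{eq:optGamma_quad}) turns the function $V$ in (\ref{eq:newLyap_B}) into a locally bounded, nonnegative storage function on $\Omega$ that satisfies the \ac{hji} \ac{lmi} (\ref{eq:gain_lmi}) pointwise on $\Omega^o$; the gain bound then follows exactly as in \cite{lavaei2023L2}, from the $\Ell_2$-gain theorem of \cite{van19922} --- used in the nonsmooth, locally bounded form of \cite[Theorem 3.1]{james1993partial} --- together with the small-signal confinement of trajectories to $\Omega^o$ that $\bu\in\mathcal{U}$ guarantees via \cite[Theorems 2, 3]{lavaei2023L2}. Throughout, the \ac{hji}'s input-gain matrix is the full matrix $\B+g(\cdot)$; since $\B$ is constant it changes none of the Hessian bounds $\beta_i,\rho_{i,a},\mu_{i,k},\beta_\epsilon,\rho_\epsilon,\mu_\epsilon$. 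I would then partition $\Omega$ into the shell $\Omega\setminus B_\epsilon(0)^{\circ}$, on which $V=\bar{\V}$ is \ac{cpa} on $\widehat{\Tcal}$, and the ball $B_\epsilon(0)$, on which $V(\x)=\x^\top\bP\x$, and treat each separately.

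On the shell the argument is the one already used for \autoref{thm:L2analysisOpt}. Because $\bar{\V}$ is \ac{cpa}, $\nabla\bar{\V}$ is constant on each simplex of $\widehat{\Tcal}$ and is given by \autoref{lemma_gradW}, so $\nabla\bar{\V}$ factors out of every entry of (\ref{eq:gain_lmi}); applying \autoref{thm:lmi_bound} in the three-block form of \autoref{thm:L2analysisOpt} (with $\nabla\bar{\V}^\top f$ in the scalar slot and $(\B+g)^\top\nabla\bar{\V}$ and $h$ in the off-diagonal slots) and performing one Schur complement reproduces the \ac{lmi} $\M_{i,j}$ of (\ref{eq:gainUB}), in which every $c_{i,j}$ takes the form (\ref{eq:c}) since no simplex of $\widehat{\Tcal}$ contains the origin. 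Imposing $\M_{i,j}\preceq0$ at every vertex of every simplex then forces (\ref{eq:gain_lmi}) throughout the interior of the shell, while $\bar{V}_{\x}\ge0$ together with the interpolation formula of \autoref{lemma_gradW} yields $\bar{\V}\ge0$ there.

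On the ball I would mirror the proof of \autoref{thm:originLMI}. Taking $V(\x)=\x^\top\bP\x$ gives $\nabla V=2\bP\x$, which vanishes at the origin; this is precisely what lets the construction tolerate $\B\ne0$, since the term $\nabla V^\top\B$ that made the purely \ac{cpa} formulation infeasible is now $O(\norm{\x}_2)$ and every remainder term vanishes at $\x=0$. Substituting into (\ref{eq:gain_lmi}) and Taylor-expanding $f,g,h$ about the origin, the scalar block becomes $\x^\top(\bP\J_f(0)+\J_f(0)^\top\bP)\x$ plus a second-order remainder controlled by $\beta_\epsilon$; the off-diagonal input block $2\x^\top\bP(\B+g(\x))$ splits into the constant-coefficient piece carrying $\bP\B$, a quadratic-in-$\x$ piece $\x^\top\bP\J_{g_k}(0)\x$ for each column, and a second-order remainder controlled by $\mu_\epsilon$; and the off-diagonal output block $h(\x)$ splits into $\J_h(0)\x$ and a remainder controlled by $\rho_\epsilon$. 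Bounding the quadratic and remainder pieces via Young's relation and the Cauchy-Schwarz inequality, the radius bound $\norm{\x}_2\le\epsilon$, Lemma~2.3 of \cite{giesl2012construction} applied to the Hessian norms, and $\norm{\bP}_2\le l_p$, then factoring out $\x$ and Schur-complementing the squared terms, would yield exactly the five-block \ac{lmi} $\M_\epsilon$ of (\ref{eq:lmiEp}) --- one block each for the two quadratic/remainder contributions from $g$ --- so that $\M_\epsilon\preceq0$ implies (\ref{eq:gain_lmi}) for all $\x\in B_\epsilon(0)$. (Equivalently one could invoke \autoref{thm:originLMI} directly with $\theta(\x)=2\bP f(\x)$ and $\zeta(\x)$ the vertical stack of $\frac{\sqrt{2}}{\gamma}(\B+g(\x))^\top\bP\x$ and $\frac{1}{\sqrt{2}}h(\x)$, both in $\mathcal{C}^2$ and zero at the origin; the direct route is preferable only for the sharper constants in (\ref{eq:lmiEp}).) Finally $\bP\succ0$ makes $\x^\top\bP\x\ge0$ on the ball, so $V\ge0$ on all of $\Omega$ with $V(0)=0$; $V$ is piecewise quadratic/\ac{cpa} and hence locally bounded, and although it may jump across $\norm{\x}_2=\epsilon$, this is admissible by \cite[Theorem 3.1]{james1993partial}, which closes the argument.

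The hard part will be the ball step: tracking cleanly the three distinct sources of off-diagonal or $(1,1)$-block error --- the constant block $\bP\B$, which must be \emph{retained} rather than bounded away, the quadratic linearization terms from $g$, and the second-order remainders of $f$, $g$, and $h$ --- and checking that completing the square and Schur-complementing them produce exactly the coefficients $l_p\epsilon n^{3/2}\beta_\epsilon$, $\frac{1}{2}\epsilon^2 n^2 p\rho_\epsilon^2$, $l_p\epsilon\sum_{k=1}^m\norm{\J_{g_k}(0)}_2$, and $l_p n^{3/2}m^{1/2}\mu_\epsilon\epsilon^2$ appearing in (\ref{eq:lmiEp}). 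A secondary point is justifying that the discontinuity of $V$ across the interface sphere does not invalidate the integral dissipation inequality; this is inherited from the locally bounded storage-function framework of \cite{james1993partial} together with the trajectory-confinement results of \cite{lavaei2023L2}.
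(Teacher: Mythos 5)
Your proposal follows essentially the same route as the paper's proof: the shell $\Omega\setminus B_\epsilon(0)^{\circ}$ is handled by the CPA argument of \autoref{thm:L2analysisOpt} via \autoref{thm:lmi_bound} and a Schur complement, while the ball is handled by Taylor-expanding the \ac{hji} about the origin, bounding the remainders with Young's relation, Cauchy--Schwarz, Lemma~2.3 of \cite{giesl2012construction}, and $\norm{\bP}_2\le l_p$, then Schur-complementing the squared terms to reach (\ref{eq:lmiEp}), with \cite[Theorem 3.1]{james1993partial} covering the discontinuity at $\norm{\x}_2=\epsilon$. The paper invokes \autoref{thm:originLMI} directly for the ball step (the alternative you note parenthetically), but this is the same argument, and your explicit remark that the constant $\B$ leaves all Hessian bounds unchanged on the shell is a correct and slightly more careful reading than the paper makes explicit.
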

\begin{proof}
    When $\norm{\x}_2 > \epsilon$ and $V$ is a \ac{cpa} function, the proof follows similarly to \autoref{thm:L2analysisOpt}. When $\norm{\x}_2\leq \epsilon$, Constraint \ref{eq:lmiEp} creates an upper bound on the \ac{hji} for all $\x \in B_{\epsilon}(0)$. To develop this constraint, perform a Taylor series expansion on the \ac{hji} about the origin, $\x^\top\bP\tilde{f} + \tilde{f}^\top\bP\x + \frac{2}{\gamma^2}\x^\top\bP\tilde{g}\tilde{g}^\top \bP\x + \frac{1}{2}\tilde{h}^\top\tilde{h} \leq 0.$
    For example,
    \begin{flalign*}
        &\tilde{g} = g(\x) = \B + \bmat{\J_{g_1}(0)\x & \hdots \J_{g_m}(0)\x} + \\&\frac{1}{2} \bmat{\bmat{\x^\top\bH_{g^{(1)}_1}(\z_{g^{(1)}_1}) \\ \vdots\\\x^\top\bH_{g^{(n)}_1}(\z_{g^{(n)}_1}}\x & \hdots & \bmat{\x^\top\bH_{g^{(1)}_m}(\z_{g^{(1)}_m}) \\ \vdots\\\x^\top\bH_{g^{(n)}_m}(\z_{g^{(n)}_m}}\x},
    \end{flalign*}
    where $\bH_{g_k^{(p)}}(\z_{g^{(p)}_k})$ is the Hessian of the $p$th row and $k$th column of $g$ evaluated at a convex combination of $\x$ and $0$.
    
    Apply \autoref{thm:originLMI} to get the \ac{lmi},
    \begin{align*}
        &\quad \quad\quad \quad\bmat{\tilde{\omega} & \bP\B & \J_h(0)^\top \\ * & (-\frac{\alpha}{2}{+}\frac{3}{2})\I & \0 \\ * & * & -2\I} \preceq 0,\\
    \text{where}\\ 
        \tilde{\omega}=&  \bP\A + \A^\top\bP + (\frac{1}{2}\epsilon^2 n^2p\rho_{\epsilon}^2 {+} \frac{1}{2}l_p^2 m n^3\mu_{\epsilon}^2\epsilon^4 \\&{+}l_p\epsilon n^{\frac{3}{2}}\beta_{\epsilon}{+}l_p^2\sum\limits_{k=1}^m\norm{\J_{g_k}(0)}_2^2\epsilon^2)\I.
    \end{align*}
    Perform a Schur complement \cite{boyd2004convex} about terms containing $l_p^2$ to get (\ref{eq:muEp}).
    
    From \autoref{thm:originLMI}, (\ref{eq:gain_lmi}) holds for all $\x \in B_{\epsilon}(0)$ for $\gamma^*.$ From Theorem 3.1 \cite{james1993partial}, if this discontinuous $V$ is a solution to the \ac{hji}, the $\gamma^*$ is a valid bound on the $\Ell_2$-gain of (\ref{eq:dynSys}).
\end{proof}

After finding an upper bound on a system's gain, Theorem 5 in \cite{lavaei2023L2} can be used to find the small-signal properties of (\ref{eq:dynSys}). This determines the largest value $r_u >0$ so that the system remains within $\Omega^o$ -- where the gain bound is valid.

\section{NUMERICAL EXAMPLE}

Consider the dynamical system
\begin{equation}\label{eq:numExSys}
    \Gcal:
    \begin{cases}
        \xdot_{1}= x_2 \\
        \xdot_2 = -\sin x_1 -x_2 + k(\x)u \\
        y = x_2,
    \end{cases}
\end{equation}
where $\x \vecdim{2},$ $u \in \mathbb{R}$, and $y \in \mathbb{R}.$ Let $\Omega$ be the triangulated region about the origin; an example of which is shown in \autoref{fig:region}. In open-loop, (\ref{eq:numExSys}) has a Lyapunov function, $V(\x) = (1 - \cos x_1) + \frac{1}{2}x_2^2.$ Substituting $V(\x)$ into the \ac{hji} as a storage function results in the inequality,
\begin{equation}\label{eq:hji_numEx}
    0.5x_2^2\left(\gamma^{-1}k(\x)^2 - 1\right) \leq 0.
\end{equation}
Given a $k(\x),$ the $\Ell_2$-gain of $\Gcal$ can be bounded above. 

The numerical examples will show how close the general purpose, algorithmic search proposed here can come to the tight bounds achieved through $V(\x)$, representing ad hoc ingenuity, that cannot necessarily be replicated in all systems. Two variations of $\Gcal$ are considered -- comparing our methodology to analytical techniques of determining $\Ell_2$-gain and previous work that established a system's $\Ell_2$-gain iteratively. In each numerical example, the gain bound is found for increasingly fine triangulations of $\Omega$ using the triangulation refinement process in \cite{lavaei2023synth} with toolbox \cite{engwirda2014locally}.

\begin{figure}
    \centering
    \includegraphics[width =0.9\columnwidth]{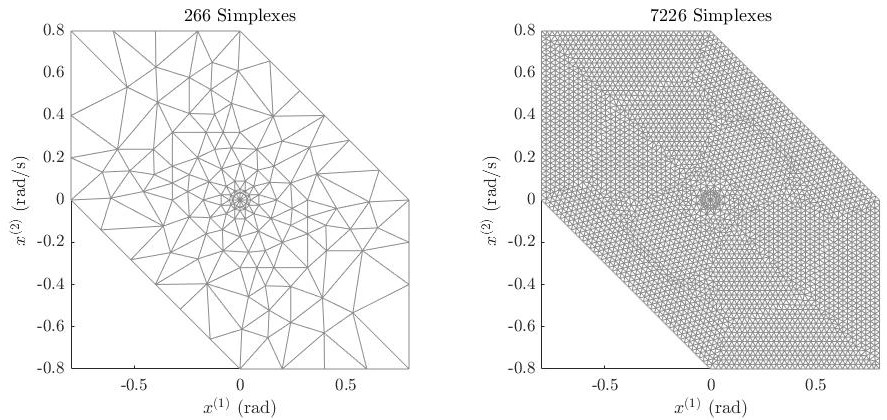}
    \vspace{-2mm}
    \caption{Triangulations of the region, $\Omega$, about the origin for dynamical system, $\Gcal.$ }
    \label{fig:region}
    \vspace{-5mm}
\end{figure}

\subsection{Pendulum}
Consider $\Gcal$ with $k(\x) {=}1$, a classic pendulum. Previous work \cite{lavaei2023L2} could not be used to bound a pendulum's $\Ell_2$-gain, because $k(\x)$ is a linear control affine term. Therefore, $\norm{g(\x)\!^\top \!\!g(\x)}_{\infty}\! {\neq} 0$ at $\x {=} 0$. Theorem (\ref{thm:L2analysisOpt_B}) removes this condition and can be used linear control affine systems.

From (\ref{eq:hji_numEx}), the $\Ell_2$-gain of $\Gcal$ satisfies $\gamma\leq 1$. \autoref{fig:bothEx} shows the result of optimizing (\ref{eq:optGamma_quad}) for an increasing number of $n$-simplexes, as well as the time taken to complete this convex optimization. The $\Ell_2$-gain bound decreases as the triangulation becomes more refined, and reaches its lowest at $\gamma \leq 2.61 $ when opitmizing over 7304 $n$-simplexes. Applying Theorem 5 with Initialization 4 from \cite{lavaei2023L2}, the small-signal property for this system was found to be $||u(t)||_{\infty} \leq 0.077.$

\subsection{Pendulum with Control Affine Input}\label{ex1}
The variation $k(\x) = x_2$ is considered here to compare to \cite{lavaei2023L2}. From (\ref{eq:hji_numEx}), $\gamma \leq \max\{x_2^2|x_2\in\Omega\}$. For the given $\Omega,$ $\gamma \leq 0.64.$ Previous work \cite{lavaei2023L2} developed a non-convex optimization problem and solved it using \ac{ico}, which only guarantees convergence to a local minima, creating more conservative bounds than in (\ref{eq:optGamma}). In \cite{lavaei2023L2}, the gain bound was found to be $\gamma \leq 3.85.$ In comparison, the best $\Ell_2$-gain bound found using the techniques developed in this paper was $\gamma \leq 0.77$ with the small signal property, $||u(t)||_{\infty} \leq 0.2554.$ \autoref{fig:bothEx} shows the optimal gain bound found when solving Problem \ref{eq:optGamma} over different numbers of $n$-simplexes. Even at the lowest number of $n$-simplexes, the gain bound still outperforms \cite{lavaei2023L2} with a bound of $\gamma \leq 1.25.$ 

\begin{figure}
    \centering
    \includegraphics[width =0.95\columnwidth]{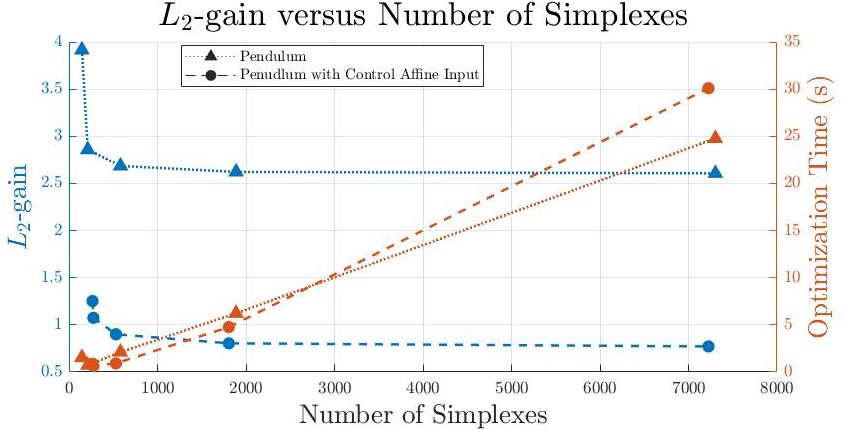}
    \vspace{-2mm}
    \caption{The $\Ell_2$-gain bound was analyzed for an increasing numbers of simplexes over $\Omega$ for two systems --  a pendulum and a pendulum with a control affine input.}
    \label{fig:bothEx}
    \vspace{-5mm}
\end{figure}

\section{DISCUSSION}%

This work developed two convex optimization problems to determine the $\Ell_2$-gain of a dynamical system for a triangulated region about the origin. By reformulating the \ac{hji} as an \ac{lmi} and developing novel \ac{lmi} error bounds for a triangulation, the system's gain can be bounded more tightly than a previous method, \cite{lavaei2023L2}. Moreover, developing an \ac{lmi} to enforce an inequality about the origin enabled a combined quadratic and \ac{cpa} storage function that can be used on systems with linear control affine inputs. A limitation of this methodology is that it can only be applied to bounded regions of the system's state space, so future work should consider methods to expand this technique to the entire state space.

\addtolength{\textheight}{-10cm}   




\section*{ACKNOWLEDGMENT}
Thank you to Dr. Miroslav Krstic for his helpful suggestions on the numerical examples.


\bibliographystyle{IEEEtran}
\bibliography{bstcontrol, IEEEabrv,biblio}
\end{document}